\theoremstyle{plain}
\newtheorem{Prop}{Proposition}[section]
\newtheorem{Thm}[Prop]{Theorem}
\newtheorem{Lem}[Prop]{Lemma}
\theoremstyle{definition}
\newtheorem{Def}[Prop]{Definition}
\theoremstyle{remark}
\newtheorem{Rem}[Prop]{Remark}
\newtheorem{Example}[Prop]{\bf Example}
\newcommand{\im}{\operatorname{Im}\nolimits}
\newcommand{\diam}{\operatorname{Diam}\nolimits}
\newcommand{\Rips}{\operatorname{Rips}\nolimits}
\newcommand{\cRips}{\operatorname{\overline{Rips}}\nolimits}
\newcommand{\Cech}{\operatorname{Cech}\nolimits}
\newcommand{\cCech}{\operatorname{\overline{Cech}}\nolimits}
\newcommand{\supp}{\operatorname{supp}\nolimits}
\newcommand{\rank}{\operatorname{rank}\nolimits}
\def\RR{{\mathbb R}}
\def\NN{{\mathbb N}}
\def\f{{\varphi}}
\def\U{\mathcal{U}}
\def\C{\mathcal{C}}
\def\R{\mathcal{R}}
\def\V{\mathcal{V}}
\def\W{\mathcal{W}}
\def\N{\mathcal{N}}
\def\eps{\varepsilon}
\numberwithin{equation}{section}
\title[Rips complexes as nerves and...]
{Rips complexes as nerves and a Functorial Dowker-Nerve Diagram}
\author{\v Ziga ~Virk}
\address{University of Ljubljana, Slovenia}
\email{ziga.virk@fri.uni-lj.si}
\thanks{}
\begin{document}

\maketitle
\begin{center}
\today
\end{center}

\begin{abstract}
Using ideas of the Dowker duality we prove that the Rips complex at scale $r$  is homotopy equivalent to the nerve of a cover consisting of sets of prescribed diameter. We then develop a functorial version of the Nerve theorem coupled with the Dowker duality, which is presented as a Functorial Dowker-Nerve Diagram. These results are incorporated into a systematic theory of filtrations arising from covers. As a result we provide a general framework for reconstruction of spaces by Rips complexes,  a short proof of the reconstruction result of Hausmann, and completely classify reconstruction scales for metric graphs.  Furthermore we introduce a new extraction method for homology of a space based on nested Rips complexes at a single scale, which requires no conditions on neighboring scales nor the Euclidean structure of the ambient space.
\end{abstract}

\section{Introduction}

One of the most common approaches in mathematics is approximation: in order to study an object we can often represent or approximate it by simpler objects, of which we have better understanding. In the context of topology, the approximations of spaces are most frequently performed using \v Cech or Rips simplicial complexes. Each such complex represents a snapshot of a space at a chosen scale.  Taking into account all scales, the individual complexes can be bound together to form a filtration of a space. These filtrations have been initially used to study local properties of spaces: they are used to define the \v Cech (co)homology and form a basis of shape theory \cite{MS} (see \cite{Dow} for more on this perspective). The Rips complexes, originally discovered by Vietoris, were rediscovered by Rips in the context of geometry of  groups. In the asymptotic setting both mentioned filtrations have been used to provide approximations of coarse spaces. 
Our focus however will be on the perspective of computational topology: filtrations are used to reconstruct spaces at small scales and to compute persistent homology at all scales, which reflects the size of holes in the space.

The choice of a filtration, based on  \v Cech or Rips complexes, obviously matters. While both filtrations are interleaved (meaning that the limiting objects as scale goes to $0$ of $\infty$ do not depend on the choice), the complexes at each scale differ, and precise connection between is known only in a few settings \cite{AA, ZV}. \v Cech complexes are nerves of covers hence their homotopy type can be usually interpreted in a convenient way as neighborhoods of a space using the Nerve theorem. Consequently, the corresponding theory is rich  and contains strong reconstruction results \cite{W}. Rips complexes on the other hand are easier to define and compute, making them the favorite choice for applications. However, there is no clear interpretation of their homotopy type, and the reconstruction results \cite{Haus, Lat, Att} require significantly more structure and hold only for small scales. 

In this paper we provide a systematic treatment of nerve and Vietoris complexes arising from covers, of which the \v Cech and Rips complexes are a special case.  
The main results are the following:
\begin{itemize}
 \item Rips complexes as nerves (Theorem \ref{Thm:RipsNerve}): we can express the homotopy type of  Rips complex  as a nerve of a cover by sets of prescribed diameter. 
 \item Functorial Dowker-nerve diagram (Theorem \ref{Thm:NerveNested}): we present a functorial version of the Nerve theorem and Dowker duality, jointly incorporated into a diagram.
\end{itemize}

The ability to treat the homotopy type of Rips complexes as nerves (in a functorial way) allows us to gain an insight into reconstruction results:
\begin{itemize}
 \item we provide a short proof of Hausmann's reconstruction for Riemannian manifolds \cite{Haus}, and extend it to persistent setting and closed Rips complexes;
 \item we completely classify reconstruction scales for metric graphs;
 \item we present a general way of extracting homology of a space using nested Rips complexes at a single scale arising from a single good cover of a space. 
\end{itemize}

The structure of the paper is the following. Section \ref{Sec:PartUnity} provides preliminaries on topologies on infinite complexes and partitions of unity.
Section \ref{Sect:Dowker} contains a systematic study of complexes arising from covers, Dowker duality,  and contains Nerve theorem for Rips complexes. In
Section \ref{Sect:Nerve} we use results of the previous section to develop reconstruction results using Rips complexes, in particular for  Riemannian manifolds and metric graphs.
Section \ref{Sect:Funct} contains functorial Dowker-Nerve diagram and applications to reconstruction.
In section Section \ref{Sect:subsets} we describe how to extract homology of a space from nested Rips complexes of finite subsets and provide a corresponding example.
Section \ref{sect:App} (Appendix) demonstrates how to use the Nerve theorem to reconstruct the homotopy type of Rips complexes  and the  restrictions connected to it.

\section{Preliminaries on complexes and partitions of unity}
\label{Sec:PartUnity}

In this section we present background on infinite simplicial complexes and partitions of unity. Our interest in the later stems from the fact that appropriate partitions of unity represent convenient maps to simplicial complexes. We provide several technical results that will be used in the future sections. The two interpretations of a simplicial complex (abstract and geometric simplicial complex) will be used interchangeably when no confusion may occur. 

\subsection{Infinite simplicial complexes}

Given $n \geq 0$, the standard (geometric) $n$-simplex is the convex hull of the collection of $n+1$  points of the form $(0,0,\ldots, 0,1,0,\ldots, 0)\in \RR^{n+1}$.
Suppose $K$ is a (geometric) simplicial complex. 
We usually equip $K$ with a topology, for which the restriction onto each simplex coincides with the topology on the standard simplex. If $K$ is locally finite, meaning that each of its vertices is contained in only finitely many simplices, then there is a unique such topology on $K$. However, if $K$ is not locally finite, there are more topologies to choose from. We recall two of them that appear standardly in textbooks:
\begin{enumerate}
 \item \textbf{The weak topology}. It is obtained by gluing simplices together via quotient maps along their boundaries, hence the resulting topology is the weak topology with respect to the identification of the boundaries. This is the standard topology used for infinite simplicial or CW (also called cell) complexes. Unless stated otherwise, each simplicial complex will be equipped with this topology. A convenient aspect of this topology is related to constructions of maps defined on it \cite[Theorem 2, p. 290]{MS}: if $K$ is a simplicial complex and $f\colon K \to X$ is a map, then $f$ is continuous iff the restriction $f|_\sigma$ is continuous for all simplices $\sigma$ of $K$.
 \item \textbf{The metric (or $\ell_1$ or strong) topology}, see for example \cite[Remark 7.14, p.116]{Dol}, \cite[Section 5]{Dyd}, \cite{MS} or \cite{Sak}. In order to define it we first have to introduce the barycentric coordinates. Let $K^{(0)}=\{v_i\}_{i\in J}$ be the set of vertices of $K$. Note that the points in the standard simplex are presented as  convex combinations of the vertices of that simplex. Similarly, each simplex $\sigma=[v_0, v_1, \ldots, v_n]$ in a simplicial complex $K$ can be parameterized by the barycentric coordinates $(t_0, t_1, \ldots, t_n) \mapsto \sum_i t_i v_i$ with $\sum_i t_i=1$ and $t_i\in [0,1], \forall i$. For example, vertex $v_0$ has only one non-zero barycentric coordinate: $t_0=1$. The midpoint on the edge between $v_0$ and $v_1$ has two non-zero barycentric coordinates: $t_0=t_1=1/2$. For each $i\in J$ let $\lambda_i \colon K \to [0,1]$ denote the $i^{th}$ barycentric coordinate, i.e., $\lambda_i (\sum_j t_j v_j)=t_i$. The strong topology on $K$ is the coarsest topology, under which all barycentric coordinates are continuous. More conveniently, we may define a metric on $K$ using the barycentric coordinates: 
 $$
 d_{\ell_1}\Big(\sum_i t_i v_i, \sum_i t'_i v_i \Big) = \sum_i |t_i - t'_i |.
 $$
 Metric $d_{\ell_1}$ is actually obtained by embedding $K$ into the metric space $\ell_1(K_0)$ so that the restriction to each simplex is a linear map.
 The resulting metric space will be denoted by $K_m$. 
 A convenient aspect of this topology is related to constructions of maps to it \cite[Theorem 8, p. 301]{MS}: a map $f\colon X \to K_m$ is continuous iff  $\lambda_i \circ f$ is continuous for all $i\in J$.
  \end{enumerate}

Conveniently enough, the identity map $K \to K_m$ is a homotopy equivalence (\cite[Corollary 2.9, p. 354]{Dol} or \cite[Theorem 4.9.6]{Sak}). 

Maps $f,g \colon X \to K$ are \textbf{contiguous}, if for each $x\in X$, there exists a simplex in $K$ containing both  $f(x)$ and $g(x)$. Contiguous maps are homotopic (\cite[Remark 2.22, p. 359]{Dol} or \cite[Theorem 4.9.7]{Sak}), although the obvious straight-line homotopy is only continuous in $K_m$.

\subsection{Partitions of unity}

A \textbf{partition of unity} on a topological space $X$ is a collection of continuous functions $\{f_i \colon X \to [0,1]\}_{i\in \mathcal{I}}$ such that for each $x\in X$ we have $\sum_{i\in \mathcal{I}} f_i(x)=1$. Such a partition is locally finite, if for each $x\in X$ there exist a neighborhood $U_x$ of $x$ and a finite $\mathcal{I}_x \subset \mathcal{I}$, such that $f_i(y)=0, \forall y\in U_x, \forall i\in \mathcal{I}\setminus \mathcal{I}_x$. A partition of unity is point-wise finite, if for each $x\in X$ only finitely many $f_i(x)$ are non-zero.

Interpretations of partitions of unity as barycentric coordinates and vice versa provide a correspondence between appropriate partitions of unity and maps to simplicial complexes. Point-wise finite partitions of unity are in bijective correspondence with continuous maps to metric simplicial complexes. 
A locally finite partition of unity on a space represents a continuous map to a simplicial complex in the weak topology. See \cite[Theorem 6.5]{Dyd} for more details of such correspondences.

We will be interested in partitions of unity arising from a cover. For a  continuous function $f\colon X \to [0,1]$ define support $supp(f)$ as the closure of the subset $f^{-1}(0,1]$ in $X$.  Given a cover $\U=\{U_j\}_{j\in A}$ of $X$, a  partition of unity $\{f_i \colon X \to [0,1]\}_{i\in \mathcal{I}}$ on $X$ is \textbf{subordinated} to $\U$, if for each ${i\in \mathcal{I}}$ there exists $j_i\in A$ so that $\supp(f_i)\subset U_{j_i}$. We will be particularly interested in locally-finite partitions of unity on $X$ subordinated to a cover $\U$ on $X$, as these encode maps to the nerve of $\U$, i.e., maps $X \to \N(\U)$. (For a definition of the nerve see Definition \ref{Def:ComplexesOfCovers} provided in Section \ref{Sect:Dowker} within a broader context of constructions of complexes.) 
\begin{itemize}
\item  A cover $\U$ of $X$ is \textbf{numerable}, if it admits a locally finite partition of unity on $X$ subordinated to $\U$.
\item Space $X$ is \textbf{paracompact}, if each open cover of $X$ is numerable. Consequently, a closed cover of a paracompact space $X$ is numerable if the interiors of the elements of $\U$ cover $X$. The class of paracompact spaces includes all metric spaces.  
\end{itemize}

Suppose $\U$ is a numerable cover of $X$. We can define a map $X \to \N(U)$ by choosing a partition of unity subordinated to $\U$ and declaring it represents barycentric coordinates. It is easy to verify (see for example \cite[Corollary 4.9.2, p. 198]{Sak}) that a different choice of a partition of unity induces a different but contiguous (hence homotopic) map. Consequently we will denote by $i_\U \colon X \to \N(\U)$ any such map and keep in mind that its homotopy type does not depend on the choice of the generating partition of unity.

\section{Dowker duality and complexes arising from covers}
\label{Sect:Dowker}

In this section we provide a systematic introduction of complexes arising from covers, and conclude with a statement describing the homotopy type of a Rips complex as a nerve of a specific cover. Our main tool will be the Dowker duality (Theorem \ref{Thm:Dowker}) introduced in \cite{Dow}.  

Suppose $R\subset Y \times Z$ is a subset of a product. It can be thought of as a relation \cite{Dow}. For our setting though it will be convenient to think of the corresponding (usually infinite) binary matrix $M_{R}=(m_{y,z})_{y\in Y, z\in Z}$ defined by the following rule:
\begin{itemize}
 \item $m_{y,z}=1$ if $(y,z)\in R$;
 \item $m_{y,z}=0$ else.
\end{itemize}
For any $z_0\in Z$ let $C_{z_0}=(m_{y,{z_0}})_{y\in Y}$ denote the column corresponding to $z_0$. Similarly, for any $y_0\in Y$ let $R_{y_0}=(m_{{y_0},z})_{z\in Z}$ denote the row corresponding to $y_0$.

Following \cite{Dow} we introduce two complexes associated to a binary matrix $M_R$.

\begin{Def}\label{Def:ComplexesOfMatrix}
Suppose $R\subset Y \times Z$, and $M_R$ is the corresponding binary matrix. 
 
 The \textbf{column complex}  of $M_R$ is a simplicial complex $\C(M_R)$ defined by the following conditions:
\begin{enumerate}
 \item the set of vertices of $\C(M_R)$ consists of all elements of $z\in Z$, for which the column $C_z$ has a non-zero entry;
 \item a finite subset $\sigma \subset Z$ is a simplex of $\C(M_R)$ iff there exists $y\in Y$, for which $m_{y,z}=1, \forall z\in \sigma$.
\end{enumerate}

Alternatively we could define $\C(M_R)$ by the following rule: a finite subset $\sigma \subset Z$ is a simplex in $\C(M_R)$ iff there exists $y\in Y$ so that $\sigma$ appears is a finite subset of $R \cap ({\{y\} \times Z})$.

In an analogous manner we can define  the \textbf{row complex} $\R(M_R)$, although a quicker definition would be using a transposed matrix: $\R(M_R)=\C(M_R^T)$.
\end{Def}

\begin{Thm}\label{Thm:Dowker}
 [Dowker duality  \cite{Dow}] Suppose $R\subset Y \times Z$ and $M_R$ is the corresponding binary matrix. Then $\C(M_R)$ and $\R(M_R)$ are homotopy equivalent.
\end{Thm}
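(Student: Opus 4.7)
The plan is to realize $\R(M_R)$ as the nerve of a natural cover of $\C(M_R)$ by contractible subcomplexes with contractible or empty intersections, and then invoke the Nerve theorem. For each $y \in Y$ whose row $R_y := \{z \in Z : (y,z) \in R\}$ is nonempty, let $K_y \subseteq \C(M_R)$ denote the subcomplex spanned by the vertex set $R_y$. By the definition of $\C(M_R)$, every finite subset of $R_y$ is a simplex of $\C(M_R)$ (with $y$ as a common witness), so $K_y$ is the \emph{full simplex} on the vertex set $R_y$---combinatorially a simplex of possibly infinite dimension, and topologically contractible. Every simplex $\sigma$ of $\C(M_R)$ is contained in some $K_y$ (take any witness of $\sigma$), so $\{K_y\}_{y \in Y}$ covers $\C(M_R)$.

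The key identification is that for any finite $\{y_0, \ldots, y_k\} \subset Y$, the intersection $K_{y_0} \cap \cdots \cap K_{y_k}$ is the full simplex on $R_{y_0} \cap \cdots \cap R_{y_k}$, which is nonempty precisely when some $z \in Z$ satisfies $(y_i, z) \in R$ for all $i$, i.e., precisely when $\{y_0, \ldots, y_k\}$ is a simplex of $\R(M_R)$. Consequently, the nerve of the cover $\{K_y\}_{y \in Y}$ is tautologically $\R(M_R)$, and every nonempty intersection is contractible.

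To conclude I would apply the Nerve theorem. Since the cover is by closed subcomplexes rather than open sets, I would pass to the metric topology $\C(M_R)_m$ (homotopy equivalent to the weak topology by the identity map, as recalled in Section~\ref{Sec:PartUnity}), and replace each $K_y$ by an open thickening $\widetilde{K}_y$ consisting of points whose nonzero barycentric coordinates lie in $R_y$; this open cover still has nerve $\R(M_R)$ and, by paracompactness of metric spaces, admits a subordinated partition of unity. The Nerve theorem then yields $\C(M_R) \simeq \R(M_R)$. The main obstacle is verifying that the Nerve theorem applies cleanly in the infinite-dimensional, infinite-cover setting; an alternative, more combinatorial route follows Dowker's original strategy of defining simplicial maps $\C(M_R)' \to \R(M_R)$ and $\R(M_R)' \to \C(M_R)$ on barycentric subdivisions via arbitrary choice functions assigning a witness to each simplex, and then checking that the iterated compositions are contiguous to the canonical subdivision maps.
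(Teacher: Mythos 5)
Your argument is essentially the paper's own proof sketch with the roles of rows and columns interchanged: the paper covers $\R(M_R)$ by the full contractible subcomplexes spanned by the columns $U_z=\{y:(y,z)\in R\}$, identifies the nerve of that cover with $\C(M_R)$, thickens to an open numerable cover, and applies the Nerve theorem, exactly mirroring your cover of $\C(M_R)$ by the $K_y$. The only point to tighten is the thickening: the set of points whose nonzero barycentric coordinates lie in $R_y$ is just $K_y$ itself (closed, not open), so you should instead take a genuine open neighborhood within each simplex (as in the paper's proof of the Functorial Dowker Theorem, e.g.\ $\tilde K_y\cap\sigma=N(K_y,0.1)\cap\sigma$), which deformation retracts to $K_y$ and preserves the nerve.
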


Dowker duality can be proved easily using the Nerve Lemma, stated in this paper as Theorem \ref{Thm:Nerve}, by considering the following steps:
\begin{itemize}
	\item  Columns of $M_{R}$ represent a good cover of $\R(M_{R})$ in the sense that for each $z\in Z$, the set $U_z = \{y\in Y: (y,z)\in R\}$ spans a full contractible complex $K_{z}$ (i.e., each finite subset of $U_z$ is a simplex in $K_z$) in $\R(M_{R})$. While the collection of complexes $\{K_z\}_{z\in Z}$ is a closed cover of $\R(M_{R})$, we can thicken each of them slightly to an open neighborhood by preserving the intersection properties (i.e. the nerve) of $\{U_z\}_{z\in Z}$.
	\item By definition, $\C(M_R)$ is the nerve of $\{U_z\}_{z\in Z}$.
\end{itemize}

Our aim is to apply Theorem \ref{Thm:Dowker} to the case of complexes arising from covers. 
Following the terminology of \cite{Dow} we introduce two complexes associated to a collection of subsets of $X$. Before we do that we would like to bring attention to a technical detail. 

\begin{Rem}
 We will often be talking about collection of subsets, or covers, the later being a collection of subsets, whose union is the whole space. Such collections may contain multiple copies of the same set, even of the empty set. For example, a collection $\{\{1,2\},\emptyset, \{2\},\{2\}\}$ is a collection of subsets of $\{1,2\}$. We will use the same notation for collections and sets, and the context should make sure there is no confusion. For example, if $\U=\{U_\alpha\}_{\alpha\in A}$ is a cover of $X$, then $U_\alpha$ are subsets of $X$, but there may exist $\alpha_1, \alpha_2, \alpha_3\in A$ with $U_{\alpha_1}=U_{\alpha_2}$ or $U_{\alpha_3}=\emptyset$.
 \end{Rem}

\begin{Def}\label{Def:ComplexesOfCovers}
 Suppose $\U$ is a collection subsets of $X$. 
 
 The \textbf{nerve}  of $\U$ is the simplicial complex $\N(\U)$ defined by the following conditions:
\begin{enumerate}
 \item the set of vertices of $\N(\U)$ consists of all non-trivial elements of $\U$;
 \item a finite subset $\sigma \subset \U$ is a simplex of $\N(\U)$ iff $\cap_{U\in \sigma} U \neq \emptyset$.
\end{enumerate}
Strictly speaking, (2) implies (1).

The \textbf{Vietoris complex}  of $\U$ is the simplicial complex $\V(\U)$  defined by the following conditions:
\begin{enumerate}
 \item the set of vertices of $\V(\U)$ consists of all points of $X$, which are contained in some element of $\U$;
 \item a finite subset $\sigma \subset X$ is a simplex of $\V(\U)$ iff there exists $U\in \U$ so that $\sigma \subset U$.
\end{enumerate} 
\end{Def}

\begin{Rem}
 It is easy to see that each simplicial complex $K$ can be expressed as a nerve (using open stars of vertices as a cover of  $K$) and as a Vietoris complex (using the cover consisting of all $n$-tuples of vertices that span a simplex in $K$ as a cover of the collection of all vertices of $K$).
\end{Rem}

In order to connect the nerve and the Vietoris construction to the Dowker duality we  assign a binary matrix to a cover.

\begin{Def}
 \label{Def:matrix}
 Suppose $\U=\{U_{i}\}_{i\in A}$ is a cover of $X$. The associated binary matrix  $M_{\U}=(a_{x,U})_{x\in X, U\in \U}$
 is defined by 
 \begin{itemize}
 \item $a_{x,U}=1$ if $x\in U$;
  \item $a_{x,U}=0$ else.
\end{itemize}
\end{Def}

Matrix $M_{\U}$ can also be thought of as a subset of $X \times A$ encoding a binary relation of containment between the points of $X$ and the elements of the cover $\U$.

Note that each column $C_{U}=\{a_{x,U}\}_{x\in X}$ of $M_{\U}$ encodes an element  $U\in \U$: the non-zero entries correspond precisely to the elements of $U$. 
Similarly, each row $R_{x}=(a_{x,U})_{U\in \U}$ of $M_{\U}$ locates an element  $x\in X$: the non-zero entries correspond precisely to the sets from $\U$ containing $x$. 

\begin{Prop}
  Suppose $\U=\{U_{i}\}_{i\in A}$ is a cover of $X$. Then $\N(\U)=\C(M_{\U})$ and $\V(\U)=\R(M_\U)$. In particular, $\N(\U)\simeq \V(\U)$.
\end{Prop}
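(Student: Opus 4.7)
The plan is a direct unpacking of definitions followed by an invocation of Dowker duality. The core observation is that the matrix $M_\U$ encodes the relation $R = \{(x, U) \in X \times \U : x \in U\}$, and both constructions $\N$ and $\V$ are visibly specializations of $\C$ and $\R$ applied to this $M_\U$.

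For the first equality $\N(\U) = \C(M_\U)$, I compare the two definitions vertex-by-vertex and simplex-by-simplex. The vertex set of $\C(M_\U)$ consists of those $U \in \U$ whose column $C_U$ has some non-zero entry, which is exactly the condition $U \neq \emptyset$, matching the non-trivial elements serving as vertices in $\N(\U)$. A finite subset $\sigma \subset \U$ is a simplex of $\C(M_\U)$ iff there is some $x \in X$ with $a_{x,U} = 1$ for every $U \in \sigma$; by the definition of the matrix entries this is equivalent to $\bigcap_{U \in \sigma} U \neq \emptyset$, which is precisely the simplex condition for the nerve.

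The second equality $\V(\U) = \R(M_\U)$ proceeds in the same spirit using $\R(M_\U) = \C(M_\U^T)$. The vertices of $\C(M_\U^T)$ are those $x \in X$ whose row is non-zero, i.e., those $x$ contained in some element of $\U$; these are exactly the vertices of $\V(\U)$. A finite $\sigma \subset X$ is a simplex iff there exists $U \in \U$ with $a_{x,U} = 1$ for all $x \in \sigma$, i.e., $\sigma \subset U$, which is the defining condition for $\V(\U)$.

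With both identifications in place, the homotopy equivalence $\N(\U) \simeq \V(\U)$ is an immediate application of Theorem \ref{Thm:Dowker} to $M_\U$. I do not foresee any substantive obstacle, since the proof is entirely definitional. The only mild subtlety is the treatment of covers containing repeated or empty entries: the column complex automatically ignores zero columns when forming its vertex set, which matches the convention of excluding empty sets from $\N(\U)$, while repeated indices in $\U$ simply contribute repeated columns encoding the same combinatorial data and so cause no inconsistency.
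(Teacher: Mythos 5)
Your proof is correct and is exactly the definitional unpacking the paper intends: the paper states this Proposition without proof, treating it as immediate from Definitions \ref{Def:ComplexesOfMatrix}, \ref{Def:ComplexesOfCovers}, and \ref{Def:matrix} together with Theorem \ref{Thm:Dowker}. Your vertex-by-vertex and simplex-by-simplex comparison, including the remark about empty and repeated elements of the collection, fills in precisely the omitted details.
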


In metric spaces, two specific constructions of complexes feature prominently throughout mathematics: the \v Cech complex and the Rips complex.

\begin{Def}
 \label{Def:CechRips}
 Suppose $X$ is a metric space, $A\subset X$ and $r>0$. For $x\in X$ let $B(x,r)$ and $\overline B(x,r)$ denote the open and closed balls respectively.
 
 The (open) \textbf{\v Cech} complex $\Cech_X(A,r)$ is defined by the following rules:
\begin{itemize}
 \item its vertices are elements of $A$;
 \item a finite subset $\sigma \subset A$ is a simplex in $\Cech_X(A,r)$ iff $\cap_{a\in \sigma} B(a,r) \cap X \neq \emptyset$.
\end{itemize}
 The closed \textbf{\v Cech} complex $\cCech_X(A,r)$ is defined analogously using closed balls. 
 
 The (open) \textbf{Rips} complex $\Rips(A,r)$ is defined by the following rules:
\begin{itemize}
 \item its vertices are elements of $A$;
 \item a finite subset $\sigma \subset A$ is a simplex in $\Rips_X(A,r)$ iff $\diam (\sigma)<r$.
\end{itemize}
 The closed \textbf{Rips} complex $\cRips(A,r)$ is defined analogously using the non-strict inequality.
\end{Def}

The \v Cech complex is usually defined as the nerve of the collection of open balls, i.e., for $A\subset X$ metric, $r>0$, and $\U=\{B(a,r)\}_{a\in A}$, $\Cech_X(A,r)=\N(\U)$. Keeping in mind the identification of vertices $a\in A$  (in the \v Cech complex) by $B(a,r)$ (in the column and the nerve complex) for the rest of this paragraph, we observe that the two definitions of the \v Cech complex agree, and that $\Cech_X(A,r)=\C(M_\U)$. In case when $A=X$ we can use this fact to deduce two interesting statements from the Dowker duality:
\begin{enumerate}
 \item $\Cech_X(X,r)=\N(\U)=\C(M_\U)\simeq \R(M_U)= \V(\U)$. Actually, basic geometry implies $\N(\U)= \V(\U)$ (via the above-mentioned identification of vertices) since a collection of balls of radius $r$ intersects iff its centers themself are contained in a ball of radius $r$ (based at any point of the aforementioned intersection). The coincidence $\N(\U)= \V(\U)$ via a natural identification of vertices of both simplices seems to be specific to the cover by open balls.
 \item Suppose $\U'$ is a cover of $X$ with the following property for	each finite subset $\sigma \subset X$:  $\sigma$ is contained in some element of $\U'$ iff $\cap_{a\in \sigma} B(a,r) \cap X \neq \emptyset$. Then $\R(M_\U)=\R(M_{\U'})$ by definition and thus by the Dowker duality, $\Cech_X(X,r)=\N(\U)\simeq \N(\U')$. This trick allows us to change a cover and  keep the homotopy type of the nerve. It will come handy in the context of Rips complexes. 
\end{enumerate}
Analogous conclusions hold for the closed \v Cech complexes as well. 

The Rips complex has, to the best of the author's knowledge, never been expressed as a nerve of a covering. However, using the setup of this section we can do just that. 

\begin{Thm} [Rips complex as a nerve]
\label{Thm:RipsNerve}
Suppose $X$ is a metric space and $r>0$. Let $\U$ be a cover of $X$ with the following   property for	each finite subset $\sigma \subset X$:  $\sigma$ is contained in some element of $\U$ iff $\diam(\sigma)<r$. Then $\Rips(X,r)\simeq \N(\U)$. 

Let $\U'$ be a cover of $X$ with the following   property for	each finite subset $\sigma \subset X$:  $\sigma$ is contained in some element of $\U$ iff $\diam(\sigma)\leq r$. Then $\cRips(X,r)\simeq \N(\U')$. 
\end{Thm}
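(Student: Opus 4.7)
The plan is to show that under the stated hypothesis the Vietoris complex $\V(\U)$ coincides with $\Rips(X,r)$ as an abstract simplicial complex, and then to invoke the proposition $\V(\U)\simeq\N(\U)$ (which is the Dowker duality packaged via $\N(\U)=\C(M_\U)$ and $\V(\U)=\R(M_\U)$). In other words, essentially all the real work has already been done in the Dowker proposition, and what remains is a tautological comparison of face lattices.

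First I would compare $\V(\U)$ and $\Rips(X,r)$. By Definition \ref{Def:ComplexesOfCovers}, a finite subset $\sigma\subset X$ is a simplex of $\V(\U)$ iff some $U\in\U$ contains $\sigma$; by the assumed property of $\U$, this holds iff $\diam(\sigma)<r$, which is precisely the defining condition of simplices in $\Rips(X,r)$. The vertex sets also agree: since $\U$ covers $X$, every singleton $\{x\}\subset X$ is contained in some $U\in\U$ (equivalently $\diam(\{x\})=0<r$), so the vertex sets of both complexes are all of $X$. Hence $\V(\U)=\Rips(X,r)$ as simplicial complexes, literally identical and not merely homotopy equivalent.

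Second I would apply the proposition established earlier: for any cover $\U$ of $X$ one has $\N(\U)\simeq\V(\U)$, via $\N(\U)=\C(M_\U)$, $\V(\U)=\R(M_\U)$, and Dowker duality (Theorem \ref{Thm:Dowker}) applied to the binary matrix $M_\U$ of Definition \ref{Def:matrix}. Chaining the two steps yields
\[
\Rips(X,r)\;=\;\V(\U)\;\simeq\;\N(\U),
\]
which is the first claim.

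For the closed case the argument is verbatim the same with strict inequality replaced by non-strict: the hypothesis on $\U'$ forces $\V(\U')=\cRips(X,r)$ on the nose, and Dowker gives $\V(\U')\simeq\N(\U')$. I do not expect any obstacle here; the only minor point requiring care is that $\N(\U)$ and $\V(\U)$ have quite different vertex sets in general, so the equivalence $\V(\U)\simeq\N(\U)$ really does need the Dowker machinery and cannot be read off directly from the definitions.
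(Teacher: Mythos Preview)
Your proposal is correct and matches the paper's own proof essentially verbatim: the paper writes $\Rips(X,r)=\R(M_\U)$ by definition and then invokes Dowker duality to get $\R(M_\U)\simeq\C(M_\U)=\N(\U)$, which is exactly your argument phrased via $\V(\U)$ in place of $\R(M_\U)$ (these are equal by the Proposition immediately preceding the theorem). The closed case is handled identically in both.
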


\begin{proof}
Equality $\Rips(X,r)=\R(M_\U)$ follows by definition, and by Dowker duality we have $\R(M_\U)\simeq \C(M_\U)=\N(\U)$. The same argument works for closed Rips complexes. 
\end{proof}

\begin{Rem} 
\label{Rem:RipsNerveExamples}
 Here we provide some examples of  coverings $\U$ satisfying the conditions of Theorem \ref{Thm:RipsNerve}. 
\begin{enumerate}
	\item Suppose $\U$ is a cover by all finite subsets of $X$ of diameter less than $r$. Then $\Rips(X,r)\simeq \N(\U)$.
	\item Suppose $\U$ is a cover by all  subsets of $X$ of diameter less than $r$. Then $\Rips(X,r)\simeq \N(\U)$.
	\item Suppose $\U$ is a cover by all finite subsets of $X$ of diameter at most $r$. Then $\cRips(X,r)\simeq \N(\U)$.
	\item Suppose $\U$ is a cover by all  subsets of $X$ of diameter at most $r$. Then $\cRips(X,r)\simeq \N(\U)$.
	\item Suppose $X$ is a locally flat compact manifold of dimension $N$. Then there exists $q>0$ so that for each $x\in X$ the ball $B(x,q)$ is isometric to the Euclidean $n$-ball of radius $q$. Assume now that $r< q/4$. For each finite subset $\sigma \subset X$ of diameter at most $r$ we can well define the convex hull of $\sigma$ via the mentioned local isomorphism with the Euclidean ball (see Definition \ref{Def:GeodConvex} for details). Now suppose $\U$ consists of the convex hulls of all finite subsets $\sigma \subset X$ of diameter less than $r$ (it essentially consists of edges, triangles, polygons, tetrahedra, etc...). Then $\Rips(X,r)\simeq \N(\U)$.
\end{enumerate}
More examples will be provided in the next section in the context of the reconstruction results.
\end{Rem}

\section{Reconstruction results}
\label{Sect:Nerve}

In this section we combine the setting of  Section \ref{Sect:Dowker} with the Nerve lemma and provide a number of reconstruction results. To be more precise, we are interested in reconstructing the homotopy type of a space via Rips or \v Cech complexes. An idea of reconstructing spaces via  \v Cech complexes is rather old and well understood. The reconstruction is usually based on the Nerve lemma, the first versions of which were apparently proved by Borsuk \cite{Bor48} and Leray \cite{Ler45}. More results aimed at the computational setting were proved in this millenium \cite{W, CCL09}. An absence of formulation of the Rips complex in terms of a nerve made similar reconstruction results for Rips complexes much more rare and not as general as the results for the \v Cech complexes \cite{Haus, Lat, Att}. With Theorem \ref{Thm:RipsNerve}  we aim to bridge this gap of understanding.

\begin{Def}
 A cover $\U$ of a space $X$ is \textbf{good}, if the intersection of each finite subcollection of $\U$ is contractible or empty. 
\end{Def}

The version of the Nerve Lemma suitable for this paper will be derived from the following result.

\begin{Thm}
 \label{Thm:Dieck}
 [Theorem 1 of \cite{Die} with $B$ being a singleton] 
 Let $f\colon X \to Y$ be a map. Suppose $\U=\{U_\alpha\}_{\alpha \in A}$ and $\V=\{V_\alpha\}_{\alpha \in A}$ are numerable covers of $X$ and $Y$ respectively, so that $f(U_\alpha)\subset V_\alpha$ for each $\alpha\in A$. Assume that for each finite $\sigma \subset A$ the restriction $f|_{\cap_{\alpha \in \sigma}U_\alpha}\colon \cap_{\alpha \in \sigma} U_\alpha \to \cap_{\alpha \in \sigma}V_\alpha$ is a homotopy equivalence. Then $f$ is a homotopy equivalence.
\end{Thm}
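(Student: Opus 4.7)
The plan is to reduce the statement to a parametric version of the Nerve theorem by constructing a simplicial "thickening" that dominates both $X$ and $Y$ simultaneously, and then to transfer the objectwise equivalences on intersections to a global equivalence. Observe first that since each restriction $f|_{\cap_{\alpha\in\sigma}U_\alpha}$ is a homotopy equivalence, the intersection $\cap_{\alpha\in\sigma}U_\alpha$ is non-empty iff $\cap_{\alpha\in\sigma}V_\alpha$ is non-empty. Hence $\N(\U)=\N(\V)$ as abstract simplicial complexes; call this common nerve $N$.

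The key steps proceed as follows. First, I would introduce the homotopy colimit models along the two covers: for $\U$ set
$$
B(\U)=\Big(\coprod_{\sigma\in N}\Delta^{\sigma}\times\bigcap_{\alpha\in\sigma}U_\alpha\Big)\Big/\sim,
$$
where the relation glues a face inclusion $\Delta^{\tau}\hookrightarrow\Delta^{\sigma}$ with the corresponding reverse inclusion $\cap_{\alpha\in\sigma}U_\alpha\hookrightarrow\cap_{\alpha\in\tau}U_\alpha$, and analogously for $B(\V)$. The projection $p_\U\colon B(\U)\to X$ sending $(t,x)$ to $x$ has a homotopy inverse built from a locally finite partition of unity subordinate to $\U$: one sends $x\in X$ to the point whose barycentric coordinates on $\Delta^{\sigma(x)}$ (where $\sigma(x)$ is the carrier simplex) are the values of the partition at $x$. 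Numerability is exactly what is needed to make this section continuous, and a simplex-by-simplex straight-line homotopy provides $p_\U\circ s_\U\simeq \mathrm{id}$ and the other composition. Symmetrically $p_\V\colon B(\V)\to Y$ is a homotopy equivalence. Second, the condition $f(U_\alpha)\subset V_\alpha$ induces, by restriction to each intersection, a map of the gluing diagrams; assembling gives $\tilde f\colon B(\U)\to B(\V)$, $(t,x)\mapsto (t,f(x))$, which commutes strictly with $f$ through $p_\U$ and $p_\V$. Third, on each piece $\Delta^{\sigma}\times\cap_{\alpha\in\sigma}U_\alpha$ the restriction of $\tilde f$ is $\mathrm{id}\times f|_{\cap U_\alpha}$, a homotopy equivalence by hypothesis. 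An induction on the skeleta of $N$, using the homotopy extension property of the inclusion $B(\U)^{(n-1)}\hookrightarrow B(\U)^{(n)}$ and the gluing lemma for pushouts along cofibrations, upgrades these objectwise equivalences to a homotopy equivalence $\tilde f$. Combining $p_\U$, $\tilde f$, $p_\V$ yields that $f$ is a homotopy equivalence.

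The step I expect to be the main obstacle is the skeletal induction that promotes cellwise equivalences of $\tilde f$ into a global one: at each stage one must simultaneously extend homotopy inverses and the relevant homotopies across all simplices of $N$, which is delicate when $N$ is infinite-dimensional or has uncountably many vertices. Numerability is precisely the hypothesis that makes this coherent, since the locally finite partition of unity controls how the extensions fit together in neighborhoods of each point. A secondary difficulty is taking care that $p_\U$ and $p_\V$ are honest homotopy equivalences rather than merely weak ones; once the section $s_\U$ is constructed from the partition of unity, this reduces to the straight-line homotopy argument inside each fiber, so the numerability assumption again does the heavy lifting.
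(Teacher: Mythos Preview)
The paper does not prove this theorem: it is quoted verbatim as a special case of Theorem~1 in tom Dieck's \emph{Partitions of unity in homotopy theory} and used as a black box. So there is no ``paper's own proof'' to compare against.

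Your proposal is essentially the standard argument, and indeed it is tom Dieck's original strategy. The ingredients are all correct: the hypothesis forces $\cap_{\alpha\in\sigma}U_\alpha=\emptyset$ iff $\cap_{\alpha\in\sigma}V_\alpha=\emptyset$, so the nerves coincide; the bar-type homotopy colimits $B(\U)$ and $B(\V)$ you write down are the right models; numerability gives the section $s_\U$ from a locally finite partition of unity, and $p_\U\circ s_\U=\mathrm{id}$ on the nose while the other composite is homotopic to the identity by the fiberwise straight-line homotopy (local finiteness is what makes this continuous); the map $\tilde f$ is the obvious one and it is a cellwise equivalence, which is promoted to a global equivalence by induction on skeleta via the gluing lemma for pushouts along cofibrations, with the passage to the colimit over skeleta handled because the skeletal inclusions are closed cofibrations.

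Two small remarks on the parts you flagged. First, the skeletal induction does not really require any additional coherence across simplices beyond what the gluing lemma for a cofibration pushout gives you at each stage; the infinite-dimensional case is dealt with by the fact that a map which restricts to a homotopy equivalence on each term of a telescope of cofibrations is itself a homotopy equivalence. Second, for $p_\U$ to be an honest homotopy equivalence (not just weak) the straight-line homotopy between $s_\U\circ p_\U$ and the identity on $B(\U)$ needs to land in $B(\U)$ at every time: this holds because for $(t,x)\in\Delta^\sigma\times\cap_{\alpha\in\sigma}U_\alpha$ the convex combination of $t$ with the partition-of-unity coordinates of $x$ is supported on indices $\alpha$ with $x\in U_\alpha$, so the pair remains a legitimate point of $B(\U)$. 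Once that is observed, the argument goes through as you describe.
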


For the following theorem recall the definition of map $i_\U \colon X \to \N(\U)$ and properties of numerable covers from the last part of Section \ref{Sec:PartUnity}.

\begin{Thm}
[Nerve Lemma]
\label{Thm:Nerve}
 If $\U$ is a good numerable cover of a  space $X$, then $i_\U \colon X\to\N (\U)$ is a homotopy equivalence.
\end{Thm}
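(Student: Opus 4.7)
The plan is to invoke Theorem \ref{Thm:Dieck} for the map $i_\U \colon X \to \N(\U)$, producing matching numerable covers of source and target both indexed by $A$. First I would fix a locally finite partition of unity $\{f_\alpha\}_{\alpha \in A}$ on $X$ subordinated to $\U$ (this exists by numerability and, up to homotopy, is what defines $i_\U$). On $\N(\U)$ I would take the open star cover $\V := \{V_\alpha\}_{\alpha \in A}$ with $V_\alpha = \mathrm{st}(v_\alpha)$; the barycentric coordinate functions furnish a locally finite partition of unity subordinated to $\V$, so $\V$ is a numerable open cover. On $X$ I would take the cozero cover $\W := \{W_\alpha\}_{\alpha \in A}$ with $W_\alpha := f_\alpha^{-1}((0,1])$. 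Since $\sum_\alpha f_\alpha \equiv 1$ the sets $W_\alpha$ cover $X$, they are numerable via $\{f_\alpha\}$ itself, and $W_\alpha \subseteq U_\alpha$ by subordination. The inclusion $i_\U(W_\alpha) \subseteq V_\alpha$ is then immediate from the definition of $i_\U$ as a barycentric-coordinate map.

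Given these matching covers, Theorem \ref{Thm:Dieck} reduces the statement to showing that, for each finite $\sigma \subseteq A$, the restriction $i_\U \colon \bigcap_{\alpha \in \sigma} W_\alpha \to \bigcap_{\alpha \in \sigma} V_\alpha$ is a homotopy equivalence. The intersection $\bigcap_{\alpha \in \sigma} V_\alpha$ is non-empty precisely when $\{v_\alpha\}_{\alpha \in \sigma}$ spans a simplex of $\N(\U)$, equivalently when $\bigcap_{\alpha \in \sigma} U_\alpha \neq \emptyset$; and in that case it is star-convex about the barycenter of the simplex, hence contractible. So I need to verify that $\bigcap_{\alpha \in \sigma} W_\alpha$ is likewise contractible or empty, with the emptiness synchronised with $\bigcap V_\alpha$; once both sides are contractible any map between them is automatically a homotopy equivalence.

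The main obstacle will be the interplay between $\bigcap W_\alpha$, $\bigcap U_\alpha$, and the partition of unity. Goodness of $\U$ yields that $\bigcap U_\alpha$ is empty or contractible, and $\bigcap W_\alpha \subseteq \bigcap U_\alpha$ is open. My strategy here will be to use $\{f_\alpha\}$ to construct a deformation retraction of $\bigcap_{\alpha \in \sigma} U_\alpha$ onto $\bigcap_{\alpha \in \sigma} W_\alpha$, combining a contraction of $\bigcap U_\alpha$ supplied by goodness with a continuous perturbation that pushes into the cozero locus $\{x : f_\alpha(x) > 0,\ \alpha \in \sigma\}$. This is the single geometric step where the abstract goodness hypothesis must be merged concretely with the partition of unity, and is the only one that does not reduce to a formal manipulation. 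Once the retraction is in place, $\bigcap W_\alpha$ is contractible exactly when $\bigcap U_\alpha$ is, so $i_\U|_{\bigcap W_\alpha}$ is a map between contractible spaces, hence a homotopy equivalence, and Theorem \ref{Thm:Dieck} finishes the proof.
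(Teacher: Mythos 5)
Your overall route is the paper's: apply Theorem \ref{Thm:Dieck} to $i_\U$ with the open-star cover on $\N(\U)$. The paper, however, takes the cover of the source to be $\U$ itself, so that the finite intersections appearing in the hypothesis of Theorem \ref{Thm:Dieck} are exactly the sets $\bigcap_{\alpha\in\sigma}U_\alpha$, whose contractibility (or emptiness, synchronised with that of $\bigcap_{\alpha\in\sigma}\mathrm{st}(v_\alpha)$) is precisely what goodness supplies. You instead pass to the cozero cover $\W$, and this is where the argument breaks.

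The step ``use $\{f_\alpha\}$ to construct a deformation retraction of $\bigcap_{\alpha\in\sigma}U_\alpha$ onto $\bigcap_{\alpha\in\sigma}W_\alpha$'' is not a technical detail to be filled in later; it is false for an arbitrary subordinated locally finite partition of unity, which is all you have fixed. Goodness constrains the sets $U_\alpha$, not the cozero sets of whichever partition of unity you happened to choose, and an open subset of a contractible set need not be contractible, let alone a deformation retract of it. Concretely, let $X=\RR^2$ and $\U=\{U_1,U_2\}$ with $U_1=U_2=\RR^2$: this is a good numerable cover and $\N(\U)$ is a $1$-simplex. Take $f_1(x)=\min(|x|,1)$ and $f_2=1-f_1$, a locally finite partition of unity subordinated to $\U$. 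Then $W_1=\RR^2\setminus\{0\}$ and $W_2$ is the open unit disk, so $W_1\cap W_2$ is a punctured disk, homotopy equivalent to $S^1$, while $U_1\cap U_2=\RR^2$ and $V_1\cap V_2$ (the open edge) are contractible. No deformation retraction of $\RR^2$ onto $W_1\cap W_2$ exists, the restriction $i_\U|_{W_1\cap W_2}$ is not a homotopy equivalence, and Theorem \ref{Thm:Dieck} is simply not applicable to the pair $(\W,\V)$. (Emptiness can also desynchronise: $\bigcap_{\alpha\in\sigma}W_\alpha$ may be empty while $\bigcap_{\alpha\in\sigma}U_\alpha$ and $\bigcap_{\alpha\in\sigma}V_\alpha$ are not.) The defect you were trying to repair --- that $i_\U(U_\alpha)\subset\mathrm{st}(v_\alpha)$ can fail for a carelessly chosen partition of unity --- is a genuine subtlety, but the repair must go the other way: keep $\U$ as the source cover, as the paper does, and adjust the partition of unity (or the map $i_\U$ within its contiguity class), rather than shrink the source cover to sets about which the goodness hypothesis says nothing.
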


\begin{proof}
 Apply Theorem \ref{Thm:Dieck} with $f$ being $i_\U$ and $\V$ being the cover of $\N(\U)$ by the collection of open stars of  vertices.
\end{proof}

A standard application of the Nerve lemma is the following reconstruction result via the \v Cech complex: if for some $r>0$ a cover by open $r$-balls of a paracompact space $X$ is a good cover, then the corresponding \v Cech complex is homotopy equivalent to $X$. Using Theorem \ref{Thm:RipsNerve} we can state an equivalent result for the Rips complexes. 

\begin{Thm} \label{Thm:RipsNerve1}
[Reconstruction Theorem for Rips complexes]
 Suppose $r>0$ and $\U$ is a good numerable cover of a  space $X$ with the following property for each finite subset $\sigma \subset X$:  $\sigma$ is contained in some element of $\U$ iff $\diam(\sigma)<r$ (or $\diam(\sigma) \leq r$ respectively). Then $\Rips(X,r)\simeq X$ (or $\cRips(X,r)\simeq X$ respectively). 
 
 In particular, if $\U$ is a good $\Rips_r$-cover of $X$, then $\Rips(X,r)\simeq X$. 
\end{Thm}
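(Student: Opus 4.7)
The plan is to combine the two principal ingredients already established in the paper: Theorem \ref{Thm:RipsNerve}, which realises the Rips complex as (homotopy equivalent to) the nerve of any cover satisfying the diameter condition, and Theorem \ref{Thm:Nerve}, which identifies the nerve of a good numerable cover with the space itself. The cover $\U$ in the hypothesis is tailored to play both roles simultaneously, so the conclusion will follow by concatenating the two homotopy equivalences.

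First I would apply Theorem \ref{Thm:RipsNerve} directly. The cover $\U$ satisfies precisely the diameter property demanded there: a finite $\sigma \subset X$ lies in some $U\in \U$ if and only if $\diam(\sigma)<r$. Hence $\Rips(X,r)\simeq \N(\U)$. Next, since $\U$ is by assumption good and numerable, Theorem \ref{Thm:Nerve} produces a homotopy equivalence $i_\U\colon X \to \N(\U)$. Chaining these gives $X \simeq \N(\U)\simeq \Rips(X,r)$, which is the desired conclusion.

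For the closed version I would repeat the same two steps, replacing the strict inequality by the non-strict one in the hypothesis of Theorem \ref{Thm:RipsNerve} and obtaining $\cRips(X,r)\simeq \N(\U)\simeq X$. The final sentence about a good $\Rips_r$-cover is then just a restatement of the main conclusion once one unpacks the definition of $\Rips_r$-cover (a cover whose finite subsets contained in some element are exactly those of diameter $<r$); no additional argument is needed.

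There is essentially no hard step here: the substantive content has been moved into Theorems \ref{Thm:RipsNerve} and \ref{Thm:Nerve}. The only point that requires mild care is ensuring that the same cover $\U$ can simultaneously serve as input for both theorems; this is automatic from the hypotheses, since good numerability is what Theorem \ref{Thm:Nerve} requires while the diameter condition is what Theorem \ref{Thm:RipsNerve} requires, and $\U$ is assumed to have both properties.
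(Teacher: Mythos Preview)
Your proposal is correct and matches the paper's own proof exactly: the paper simply states that the result follows from Theorems \ref{Thm:RipsNerve} and \ref{Thm:Nerve}, which is precisely the chaining $\Rips(X,r)\simeq \N(\U)\simeq X$ you describe.
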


\begin{proof}
 Follows from Theorems \ref{Thm:RipsNerve} and \ref{Thm:Nerve}.
\end{proof}

\begin{Example}
 If $X$ is a compact locally flat manifold, then cover $\U$ of Remark \ref{Rem:RipsNerveExamples} (4) is a good cover and hence $X \simeq \Rips(X,r)$.
\end{Example}

\subsection{Reconstruction of Riemannian manifolds and more general geodesic spaces}

A reconstruction theorem for Riemannian manifolds via Rips complexes was first proved in \cite{Haus}. Using our framework we can provide a much simpler proof. Recall that a metric space is geodesic if every pair of points $x,y$ is connected by a path of length $d(x,y)$ called a geodesic. Note that our scope of a geodesic is a bit more restrictive than a more general notion of a geodesic in differential geometry, which is determined by the local curvature: in our setting, a geodesic is the trace of an isometric embedding of a closed line segment in a general geodesic space, not necessarily in a manifold.

\begin{Def}\label{Def:rM} \cite{Haus}
Suppose $X$ is a geodesic space. Define $r(X) \geq 0$ as the least upper bound of the set of real numbers $r$ satisfying the following two conditions:
\begin{enumerate}
 \item For all $x,y\in X$ with $d(x,y)<2r$ there exists a unique  geodesic from $x$ to $y$ of length $2r$.
 \item Let $x,y,z,u\in X$ with $d(x,y)<r, d(u,x)<r, d(y,u)<r$ and let $z$ be a point on the geodesic joining $x$ and $y$. Then $d(u,z)\leq \max \{d(u,x), d(u,y)\}$.
 \item If $\gamma$ and $\gamma'$ are arc-length parameterized geodesics such that $\gamma(0)=\gamma'(0)$ and if $0 \leq s, s' < r$ and $0 \leq t < 1$, then $d(\gamma(ts),\gamma'(ts'))\leq d(\gamma(s), \gamma'(s'))$.
\end{enumerate}
\end{Def}

As was stated in \cite{Haus}, $r(X)>0$ if $X$ admits a strictly positive injectivity radius and an upper bound on its sectional curvature. In particular, each compact Riemannian manifold has $r(X)>0$. 

%
\begin{Def}
 \label{Def:GeodConvex}
 Suppose $X$ is a geodesic space with $r(X)>0$, $x_0\in X$, and $A\subset B(x_0, r(X))$ is open. Subset $A$ is geodesically convex if each  geodesic  connecting two points in $A$ lies entirely in $A$.  The geodesic hull of $A'\subset B(x_0, r(X))$  is the smallest geodesically convex set containing $A'$.
 \end{Def}

\begin{Prop}
 \label{Prop:Convex}
 Suppose $X$ is a geodesic space with $r(X)>0$, $\eps>0$, $0<q\leq r(X)/2$, and $A\subset X$ is of diameter $\diam(A)\leq q-2\eps$. Then there exists an open geodesically convex subset $A_\infty$ containing $A$ of diameter  $\diam(A_\infty)\leq q-\eps$.
\end{Prop}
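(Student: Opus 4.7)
The plan is to build $A_\infty$ by alternately thickening $A$ by ever-smaller open neighbourhoods and then taking the geodesic hull; the thickenings will guarantee openness, the hulls will guarantee geodesic convexity, and Definition \ref{Def:rM}(2) will keep the diameter under control throughout.

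The key preliminary fact I would establish is: whenever $S\subset X$ is nonempty with $\diam(S)<r(X)$ (so $S\subset B(x_0,r(X))$ for any $x_0\in S$ and the geodesic hull of Definition \ref{Def:GeodConvex} makes sense), one has $\diam(\conv(S))\leq \diam(S)$. This is proved by writing $\conv(S)=\bigcup_{n\geq 0}S_n$ with $S_0=S$ and $S_{n+1}=S_n\cup\{\text{points on geodesics joining pairs in }S_n\}$, and arguing inductively that $\diam(S_n)\leq \diam(S)$. Indeed, for $z_1,z_2\in S_{n+1}$ lying on geodesics $[x_1,y_1]$ and $[x_2,y_2]$ with endpoints in $S_n$, two back-to-back applications of Definition \ref{Def:rM}(2) -- legitimate because the auxiliary $r$ can be chosen in $(\diam(S_n),r(X))$ -- yield
\[
d(z_1,z_2) \leq \max\{d(z_1,x_2),d(z_1,y_2)\} \leq \max_{u\in\{x_1,y_1\},\,v\in\{x_2,y_2\}}d(u,v) \leq \diam(S_n).
\]

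Next, set $\eta_n := \eps/2^{n+2}$, $B_0 := A$, and inductively $B_{n+1} := \conv(N_{\eta_n}(B_n))$ where $N_\delta(T)=\{x\in X:d(x,T)<\delta\}$, and let $A_\infty:=\bigcup_{n\geq 0}B_n$. The claim above together with the elementary bound $\diam(N_\delta(T))\leq \diam(T)+2\delta$ gives by induction
\[
\diam(B_n) \;\leq\; (q-2\eps)+\sum_{k=1}^{n}\tfrac{\eps}{2^{k}} \;=\; q-\eps-\tfrac{\eps}{2^n} \;<\; q-\eps,
\]
so $\diam(A_\infty)\leq q-\eps$, and in particular every $B_n$ sits inside a common ball of radius $r(X)$, making the iterated hulls well defined. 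Openness of $A_\infty$ is built in: every $z\in A_\infty$ lies in some $B_n$, whence $N_{\eta_n}(z)\subset N_{\eta_n}(B_n)\subset B_{n+1}\subset A_\infty$. Geodesic convexity follows because any two points of $A_\infty$ lie in some common $B_N$ (the sets are nested, $B_n\subset B_{n+1}$), and the geodesic between them then lies in the geodesically convex set $B_{N+1}$.

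The main obstacle I foresee is the diameter-preservation claim for $\conv$: Definition \ref{Def:rM}(2) needs \emph{strict} inequalities $d(\cdot,\cdot)<r<r(X)$, so each invocation requires choosing an auxiliary $r$ slightly below $r(X)$ but above the current diameter of the set under consideration. The geometric-series margins $\eps/2^n$ in the construction were chosen precisely to leave that slack at every stage, and since $q-\eps<r(X)/2$ they also automatically keep everything inside a single ball of radius $r(X)$.
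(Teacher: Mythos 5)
Your proposal is correct and follows essentially the same strategy as the paper: alternate $\eps/2^n$-thickenings with geodesic-closure steps, use Definition \ref{Def:rM}(2) to show that adding geodesics does not increase diameter, and sum the geometric series to keep $\diam(A_\infty)\leq q-\eps$. The only (immaterial) difference is that you take the full geodesic hull at each stage, whereas the paper adds just one round of geodesic traces per stage and lets the union over all stages supply both openness and geodesic convexity.
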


\begin{proof}
We will construct $A_\infty$ inductively. Define $A_0=A$ and let $A'_0$ be the union of traces of all unique geodesics from  Definition \ref{Def:rM}(1), whose endpoints lie in $A_0$. It follows from    Definition \ref{Def:rM}(2)  that $\diam(A_0)=\diam(A'_0)$.

We now present an inductive step. For each $i\in \NN$:
\begin{itemize}
 \item Define $A_i$ to be the open $2^{-i-1}\eps$ neighborhood of $A'_{i-1}$. Note that $\diam(A_i) \leq \diam(A'_{i-1})+ 2^{-i} \eps \leq \diam(A_0) +(1-2^i)\eps \leq q-\eps -2^{-i}\eps$.
 \item Define $A'_i$ as the union of traces of all unique 
 geodesics from  Definition \ref{Def:rM}(1), whose endpoints lie in $A_i$. Since $\diam(A_i)$ is less than $r(X)/2$ we can use Definition \ref{Def:rM}(2)  to conclude $\diam(A_i)=\diam(A'_i)$.
 \end{itemize}
Then $A_\infty =\cup_{i\in \NN} A_i=\cup_{i\in \NN} A'_i$ satisfies the conclusions of the proposition.
\end{proof}

\begin{Thm} \label{Thm:Hausmann}
[Hausmann's Theorem \cite{Haus}]
Suppose $X$ is a geodesic space with $r(X)>0$ (for example, a compact Riemannian manifold). Then $X\simeq \Rips(X,q)$, for each positive $q\leq r(X)/2$.
\end{Thm}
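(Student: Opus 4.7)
The plan is to invoke the Reconstruction Theorem for Rips complexes (Theorem \ref{Thm:RipsNerve1}) with a well-chosen cover $\U$ of $X$. The natural candidate, in view of Proposition \ref{Prop:Convex}, is to let $\U$ be the collection of all open geodesically convex subsets of $X$ of diameter strictly less than $q$. It then suffices to verify that $\U$ (i) covers $X$, (ii) is numerable, (iii) satisfies the Rips-containment condition (a finite $\sigma \subset X$ lies in some element of $\U$ iff $\diam(\sigma)<q$), and (iv) is a good cover.

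Properties (i)--(iii) I would deduce directly from Proposition \ref{Prop:Convex}. Given any finite $\sigma \subset X$ with $\diam(\sigma) = d < q$, set $\eps = (q-d)/3 > 0$ so that $\diam(\sigma) \leq q - 2\eps$; the proposition produces an open geodesically convex neighborhood $A_\infty \supset \sigma$ of diameter at most $q - \eps < q$, i.e., an element of $\U$ containing $\sigma$. The reverse direction of (iii) is immediate, and specializing $\sigma$ to a singleton gives (i). Numerability in (ii) holds because $X$ is a metric space, hence paracompact, and $\U$ is an open cover.

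The main obstacle is the good cover property (iv), which I would split into two observations. First, finite intersections from $\U$ remain geodesically convex: if $W = \bigcap_{j=1}^k U_j \neq \emptyset$ with each $U_j \in \U$, then any two points $y, z \in W$ satisfy $d(y,z) < q \leq r(X)/2$, so the unique geodesic provided by Definition \ref{Def:rM}(1) exists and, lying in each convex $U_j$, lies in $W$; in particular $\diam(W) < q \leq r(X)/2$. Second, any such $W$ is contractible: fixing a basepoint $x_0 \in W$ and letting $\gamma_y\colon [0, d(x_0,y)] \to W$ denote the unit-speed geodesic from $x_0$ to $y$, the geodesic retraction
\[
H(y,t) \;=\; \gamma_y\bigl((1-t)\,d(x_0,y)\bigr)
\]
takes values in $W$ and deforms $W$ to $x_0$. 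Joint continuity is the delicate point: for each fixed $t$, Definition \ref{Def:rM}(3) forces $y \mapsto H(y,t)$ to be $1$-Lipschitz (taking the two geodesics from $x_0$ to $y$ and $y'$ and parameters $s = d(x_0,y)$, $s' = d(x_0,y')$), while continuity in $t$ is immediate from the continuity of each $\gamma_y$. With $\U$ now verified as a good numerable cover satisfying the Rips-containment condition, Theorem \ref{Thm:RipsNerve1} yields $\Rips(X,q) \simeq X$.
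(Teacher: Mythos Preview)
Your argument is correct and follows essentially the same route as the paper: build an open cover of $X$ by geodesically convex sets of diameter less than $q$ using Proposition~\ref{Prop:Convex}, check the Rips-containment condition, and observe that intersections of geodesically convex sets remain geodesically convex and hence contractible, so that Theorem~\ref{Thm:RipsNerve1} applies. The only cosmetic differences are that you take \emph{all} such convex sets rather than one per finite subset, and that you spell out the contractibility via geodesic retraction (using Definition~\ref{Def:rM}(3) for continuity) where the paper simply asserts it.
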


\begin{proof}
 For each finite subset $\sigma \subset X$ of diameter less than $q$ choose a geodesically convex open subset $\sigma_\infty$ of diameter less than $q$ by Proposition \ref{Prop:Convex}. Define an open cover consisting of such sets:
$$
\U=\{\sigma_\infty\}_{\sigma \subset X, |\sigma|<\infty, \diam(\sigma)<q}.
$$ 
By Theorem \ref{Thm:RipsNerve}, $\N(\U)\simeq \Rips(X,q)$. Cover $\U$ is a good cover since non-empty intersections of geodesically convex sets are geodesically convex sets, which in our case are contractible by Definition \ref{Def:GeodConvex} as they are all of diameter less than $r(X)/2$. Hence $\N(\U)\simeq X$ by Theorem \ref{Thm:Nerve}.
\end{proof}

Another alternative proof of this fact was obtained by Henry Adams and Florian Frick.

In a similar way we can prove a version the Hausmann's Theorem for closed Rips complexes.

\begin{Thm} \label{Thm:Hausmann1}
[A version of the Hausmann's Theorem for closed Rips complexes]
Suppose $X$ is a geodesic space with $r(X)>0$ (for example, a compact Riemannian manifold). Then $X\simeq \cRips(X,q)$, for each positive $q< r(X)/2$.
\end{Thm}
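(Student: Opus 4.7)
The plan is to mirror the proof of Theorem \ref{Thm:Hausmann}, replacing open geodesically convex hulls with closed ones, which is needed because an open geodesically convex set of diameter $\leq q$ cannot in general contain two points at distance exactly $q$ (geodesics could typically be extended slightly inside an open set, violating the diameter bound). The goal is to produce a good numerable cover $\U'$ of $X$ satisfying the nerve condition of Theorem \ref{Thm:RipsNerve} for the closed Rips complex, namely that a finite $\sigma \subset X$ is contained in some element of $\U'$ iff $\diam(\sigma) \leq q$; the conclusion then follows from Theorems \ref{Thm:RipsNerve} and \ref{Thm:Nerve}.

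For each finite $\sigma \subset X$ with $\diam(\sigma) \leq q$ I would construct a closed geodesically convex $U_\sigma \supset \sigma$ with $\diam(U_\sigma) \leq q$ as follows. Set $H_0 = \sigma$, and let $H_{n+1}$ be the union of traces of all unique geodesics joining pairs of points in $H_n$; these geodesics exist since $\diam(H_n) \leq q < r(X)$ by Definition \ref{Def:rM}(1). An iterated application of Definition \ref{Def:rM}(2), valid because all relevant pairwise distances remain strictly less than $r(X)$, yields $\diam(H_{n+1}) \leq \diam(H_n)$; consequently $U_\sigma := \overline{\bigcup_n H_n}$ is closed, has diameter at most $q$, and is geodesically convex (approximating any pair of points in $U_\sigma$ by pairs in $\bigcup_n H_n$, whose connecting geodesics already lie in $\bigcup_n H_n$, and passing to the limit using the continuity of the geodesic operator that follows from Definition \ref{Def:rM}(3)). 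To secure numerability, I would additionally include a small closed geodesically convex ball $\overline{B(x, \delta_x)}$ of diameter $\leq q$ around each $x \in X$, so that the interiors of the elements of $\U'$ cover $X$, making this closed cover of the paracompact space $X$ numerable.

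The cover $\U'$ is good: any nonempty finite intersection is closed and geodesically convex, of diameter $\leq q < r(X)/2$, hence contractible via the geodesic retraction to any basepoint supplied by Definition \ref{Def:rM}(3). Theorem \ref{Thm:RipsNerve} (closed version) then yields $\cRips(X, q) \simeq \N(\U')$, and the Nerve Lemma (Theorem \ref{Thm:Nerve}) gives $\N(\U') \simeq X$. The main obstacle will be the boundary case $\diam(\sigma) = q$: Proposition \ref{Prop:Convex} as stated only produces open hulls with a strictly positive loss $\eps$ in diameter, and open convex sets cannot in general exactly match the bound $\diam(U_\sigma) \leq q$ while containing two points at distance $q$. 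This is precisely why the theorem requires the strict inequality $q < r(X)/2$ rather than the non-strict inequality of Theorem \ref{Thm:Hausmann}: the slack ensures that the iterative geodesic closure takes place inside a region where Definition \ref{Def:rM} applies and keeps the diameter bound exact.
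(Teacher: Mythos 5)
Your proof is correct and follows essentially the same route as the paper: both handle the boundary simplices of diameter exactly $q$ by adjoining iterated geodesic hulls (whose diameter stays at $q$ by Definition \ref{Def:rM}(2)) to a cover that already contains an interior-covering, hence numerable, subfamily, and then apply Theorems \ref{Thm:RipsNerve} and \ref{Thm:Nerve}. The only cosmetic difference is that the paper keeps the open geodesically convex sets from the proof of Theorem \ref{Thm:Hausmann} for the simplices of diameter $<q$ (so no extra balls are needed for numerability) and does not take topological closures of the added hulls, a step that is harmless but unnecessary in your argument.
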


\begin{proof}
 The proof goes along the same lines as the proof of Theorem \ref{Thm:Hausmann}, with $\U$ of that proof being modified by adding geodesic hulls of the collection $\{\sigma \subset X :  |\sigma|<\infty, \diam(\sigma)=q\}$. After the modification the obtained cover is still good (as we only added geodesically convex sets of diameter less than $r(x)/2$),  numerable (as it contains a numerable cover) and the diameters of the added sets are  precisely $q$ by Definition \ref{Def:rM} (2).
\end{proof}

\subsection{Reconstruction of metric graphs}
A \textbf{metric graph} $X$ is a finite graph equipped with a geodesic metric. The length of the shortest non-contractible loop in a metric graph will be denoted by $\ell(X)$. If $X$ is a tree then $\ell(X)=\infty$. 
In this subsection we use the Reconstruction Theorem for Rips complexes to completely classify scales $r$, for which the Rips complex of a metric graph is homotopy equivalent to $X$. 

\begin{Prop}
 \label{Prop:MetricGraph}
 Suppose $X$ is a metric graph. 
\begin{enumerate}
 \item If  $d(x,y) < \ell(X)/2$ for some $x,y\in X$, then there exist precisely one simple (non-selfintersecting) path $\gamma$ from $x$ to $y$ of length less than $\ell(X)/2$. In particular, a shortest geodesic between $x$ and $y$ is unique and equals $\gamma$.
 \item If $A\subset X$ is geodesically convex and $\diam(A)< \ell(X)/2$, then $A$ is a tree.
 \item Let $A\subset X$ be compact,  $\diam(A)< \ell(X)/3$, $x,y\in A$, and let $\gamma$ be the (trace of the) unique shortest geodesic between $x$ and $y$. Then  $\diam (A \cup \gamma) = \diam(A)$. 
 \item Let $A\subset X$ be finite,  $\diam(A)< \ell(X)/3$, and for $x,y\in A$ let $\gamma_{x,y}$ be the (trace of the) unique shortest geodesic between $x$ and $y$. Define $A'=\cup_{x,y\in A}\gamma_{x,y}$. Then $A'$ is compact, geodesically convex and $\diam (A') = \diam(A)$. In particular, by (2) $A'$ is a tree.  
\end{enumerate}
\end{Prop}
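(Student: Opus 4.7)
My plan rests on one recurring observation that I will use throughout: any closed loop in $X$ of length strictly less than $\ell(X)$ is null-homotopic, and consequently its image in the graph $X$ is a tree (otherwise it would contain a simple closed curve, which in a graph is necessarily non-contractible and thus of length at least $\ell(X)$).

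For (1), I will observe that the union of two distinct simple paths from $x$ to $y$, each of length less than $\ell(X)/2$, contains a simple closed curve of total length less than $\ell(X)$, contradicting the definition of $\ell(X)$; uniqueness of the shortest geodesic then follows since it is a simple path of length $d(x,y) < \ell(X)/2$. For (2), I will take a simple closed curve in $A$ of minimal length $L \geq \ell(X)$, call it $C$, with antipodal points $p, q$ and the two arcs $\alpha, \beta \subset C$ of length $L/2$, and use (1) and geodesic convexity to place the unique geodesic $\gamma_{p,q}$ (of length less than $\ell(X)/2 \leq L/2$) inside $A$. Since $[C] = [\alpha \cdot \gamma_{p,q}^{-1}] + [\gamma_{p,q} \cdot \beta^{-1}]$ in $H_1$, at least one summand is nonzero, yielding a non-contractible loop in $A$ of length strictly less than $L$ which in turn contains a simple closed curve shorter than $C$ -- a contradiction.

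For (3), I will set $\alpha = \gamma_{a,x}$ and $\beta = \gamma_{a,y}$ (both unique by (1)); then $\alpha \cdot \gamma \cdot \beta^{-1}$ is a closed loop of length at most $3\diam(A) < \ell(X)$, so by the key observation $T = \alpha \cup \gamma \cup \beta$ is a tree. The three geodesics meet at a tripod center $m$, and applying the standard tree inequality gives $d_T(a, p) \leq \max(d_T(a, x), d_T(a, y))$ for any $p \in \gamma$; since $\alpha, \beta, \gamma$ are themselves $X$-geodesics, the relevant tree-distances coincide with $X$-distances, and the bound $d(a, p) \leq \diam(A)$ follows, yielding $\diam(A \cup \gamma) = \diam(A)$.

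For (4), compactness will be immediate from $A'$ being a finite union of compact arcs, and the diameter identity will follow by iterating (3): enumerating the geodesics and setting $A_0 = A$, $A_{k+1} = A_k \cup \gamma_{x_{k+1}, y_{k+1}}$, each $A_k$ has the same diameter as $A$ and remains below $\ell(X)/3$. The real work will be in showing geodesic convexity, which I will reduce to proving $\gamma_{u,v} \subset A'$ for any $u, v \in A'$ via a double tripod argument: first, for $u \in \gamma_{x,y}$ and any $w \in A$, the tripod $\gamma_{u,w} \cup \gamma_{u,x} \cup \gamma_{x,w}$ has perimeter $< \ell(X)$, hence is a tree with a common tripod center, placing $\gamma_{u,w}$ inside $\gamma_{u,x} \cup \gamma_{x,w} \subset A'$ (note $\gamma_{u,x}$ is a sub-arc of $\gamma_{x,y}$); second, applying this with $u, v \in A'$ and a suitable reference point $w \in A$, the tripod on $u, v, w$ likewise forces $\gamma_{u,v} \subset A'$. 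Once geodesic convexity is established, (2) forces $A'$ to be a tree. I expect the main obstacle to be precisely this bookkeeping step of (4) -- verifying that the tripod centers match up well enough for the set-theoretic inclusions of geodesics to go through -- and the short-loop-gives-a-tree dichotomy will be the repeated tool that makes each such identification rigorous.
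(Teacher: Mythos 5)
Your proposal is correct and follows essentially the same strategy as the paper: uniqueness of short paths because two of them would bound a cycle of length $<\ell(X)$, a shortcut geodesic between antipodal points of a minimal cycle for (2), and tripod/median arguments for the diameter and convexity claims in (3) and (4). The only cosmetic differences are that you phrase (2) via $H_1$-additivity where the paper argues that the geodesic cannot be homotopic rel endpoints to both arcs, and in (4) you establish geodesic convexity by a ``double tripod'' through a reference point of $A$ where the paper notes that the geodesics emanating from the newly added point cover $A'_j$ --- minor repackagings of the same ideas.
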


\begin{proof}
(1)
If this was not so, the two different paths between them would form a non-contractible loop of length less than $ \ell(X)$, a contradiction. 

(2) It suffices to show that $A$ contains no non-contractible loop. Assume $\gamma$ is the shortest non-contractible loop in $A$ and suppose $x,y\in \gamma \subset A$ are diametrally opposite points on $\gamma$, i.e., both segments $\gamma_1$ and $\gamma_2$ of $\gamma$ between $x$ and $y$ are of the same length at least $\ell(X)/2$. The shortest geodesic $\nu$ between $x$ and $y$ is of length less than $\ell(X)/2$ and can't be homotopic to both $\gamma_1$ and $\gamma_2$ (rel endpoints). Without the loss of generality we assume $\nu$ is not homotopic to $\gamma_1$. Concatenating $\nu$ and $\gamma_1$ we obtain a non-contractible loop in $A$ of length at most $\diam(A) + \textrm{length}(\gamma) /2 < \textrm{length}(\gamma)$, a contradiction to the minimality of $\gamma$.

(3) Choose $z\in A$. Let $\gamma_x$ and $\gamma_y$ denote the geodesics from $z$ to $x$ and $y$ respectively. Both $\gamma_x$ and $\gamma_y$ follow the same trace to some point on $\gamma$, and then each of them runs along $\gamma$ towards its respective point: if this was not the case, $\gamma, \gamma_x$ and $\gamma_y$ would form a non-contractible loop of length less than $\ell(X)$, a contradiction. Geodesics $\gamma_x$ and $\gamma_y$ jointly visit all the points of $\gamma$, hence for each $w\in \gamma$ we have $d(z,w)\leq \diam(A)$.

(4) We proceed by induction. Let $A=\{x_1, x_2, \ldots, x_k\}$ and $A_i=\{x_1, x_2, \ldots, x_i\}$. Now $A'_2$ is geodesically convex of diameter $d(x_1, x_2)=\diam(A_2)$ as it is just a geodesic. 

Assume now that $A'_j$ is geodesically convex of diameter $\diam(A_j)$. By inductive use of (3) we conclude $\diam(A'_{j+1})=\diam(A_{j+1})$. Furthermore, the argument of (3) shows that the traces of geodesics $\{\gamma_{j+1, i}\}_{i \leq j}$ cover $A'_j$ hence by (1), $A'_{j+1}$ is geodesically convex.
\end{proof}

\begin{Thm}
\label{Thm:RipsGraph}
 Suppose $X$ is a metric graph and $r>0$. Then $\Rips(X,r) \simeq X$ iff $r \leq \ell(X)/3$.
\end{Thm}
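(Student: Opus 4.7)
The proof naturally splits into two directions.

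\textbf{Forward direction} ($r \leq \ell(X)/3 \Rightarrow \Rips(X,r) \simeq X$): The plan is to apply Theorem~\ref{Thm:RipsNerve1} by producing a good numerable open cover $\U$ of $X$ satisfying its diameter matching condition. For each finite $\sigma \subset X$ with $\diam \sigma < r$, apply Proposition~\ref{Prop:MetricGraph}(4) to obtain the geodesic hull $\sigma' = \bigcup_{x,y\in\sigma}\gamma_{x,y}$, a compact geodesically convex tree of diameter $\diam \sigma < r \leq \ell(X)/3$. Let $\sigma_\infty$ be an open $\varepsilon$-thickening of $\sigma'$, with $\varepsilon > 0$ small enough that $\diam \sigma_\infty < r$ and $\sigma_\infty$ deformation retracts onto $\sigma'$ (hence is contractible), and small enough so that $\sigma_\infty$ itself is geodesically convex. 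Take $\U = \{\sigma_\infty\}_\sigma$. Nonempty finite intersections are open of diameter $< \ell(X)/2$; by uniqueness of short geodesics (Proposition~\ref{Prop:MetricGraph}(1)) they are geodesically convex, hence trees by Proposition~\ref{Prop:MetricGraph}(2), hence contractible---so $\U$ is good. The diameter condition holds by construction: $\sigma \subset \sigma_\infty$, and conversely $\sigma \subset \tau_\infty$ forces $\diam \sigma \leq \diam \tau_\infty < r$. Theorem~\ref{Thm:RipsNerve1} delivers $\Rips(X,r) \simeq X$.

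\textbf{Reverse direction} ($r > \ell(X)/3 \Rightarrow \Rips(X,r) \not\simeq X$): Let $\gamma \subset X$ be a shortest non-contractible loop, of length $\ell(X)$, and let $a, b, c$ be three equally-spaced points on $\gamma$. Since $d(a,b) = d(b,c) = d(a,c) = \ell(X)/3 < r$, the triple $\{a,b,c\}$ spans a 2-simplex $\Delta$ in $\Rips(X,r)$. Fix $r_0 \in (0, \ell(X)/3)$ close to $\ell(X)/3$; by the forward direction, $\Rips(X, r_0) \simeq X$ via the canonical map $\iota_{r_0}$, and $(\iota_{r_0})_*[\gamma]$ is a nonzero class of $H_1(\Rips(X, r_0))$ representable by a fine polygonal 1-cycle along $\gamma$ (using vertices at pairwise distance $<r_0$ on $\gamma$). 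Under the bonding inclusion $j : \Rips(X, r_0) \hookrightarrow \Rips(X, r)$ this cycle can be homologized onto $\partial \Delta = [a,b]+[b,c]+[c,a]$ by threading 2-simplices along the three arcs of $\gamma$: every triple of nearby points on $\gamma$ has diameter $< r$ and so spans a 2-simplex of $\Rips(X, r)$. Since $\partial \Delta$ bounds $\Delta$, the class vanishes in $H_1(\Rips(X, r))$.

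The main obstacle is to turn this $H_1$-vanishing into $\Rips(X,r) \not\simeq X$. My plan is to use that the canonical maps are consistent across scales: choosing compatible partitions of unity yields $\iota_r \simeq j \circ \iota_{r_0}$, so $(\iota_r)_*[\gamma] = 0$ in $H_1(\Rips(X,r))$. Assume for contradiction that $\Rips(X, r) \simeq X$ via some homotopy equivalence $\phi$. Then $\phi \circ \iota_r : X \to X$ is a continuous self-map that kills the nontrivial class $[\gamma] \in H_1(X)$. On the other hand, comparing via the commutative diagram of canonical maps and using that $\phi$ is a homotopy equivalence and $\iota_{r_0}$ is a homotopy equivalence, one sees that $\phi \circ \iota_r$ must induce an isomorphism on $H_1$ (it agrees, up to a change of generators, with the isomorphism $(\iota_{r_0}^{-1})_* \circ j_* \circ (\iota_{r_0})_*$ on the image of $H_1$); this is incompatible with killing $[\gamma]$, the required contradiction.
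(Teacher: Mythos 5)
Your forward direction is essentially the paper's argument: the paper also feeds Theorem~\ref{Thm:RipsNerve1} with the cover by geodesic hulls $U'$ of finite sets $U$ of diameter $<r$ from Proposition~\ref{Prop:MetricGraph}(4), and gets goodness from parts (1), (2) and (4) exactly as you do. The only difference is that the paper uses the closed hulls themselves and obtains numerability from the fact that a closed cover of a paracompact space whose interiors cover is numerable, whereas you pass to open $\varepsilon$-thickenings. That works, but your assertion that a small thickening of $\sigma'$ is again geodesically convex is stated without proof; it does hold (the geodesic $[p,q]$ is the reduction of the concatenation $[p,p']\cdot[p',q']\cdot[q',q]$, hence lies in its trace, which is inside the neighborhood), but you should either supply that argument or follow the paper and avoid thickening altogether.

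The reverse direction has a genuine gap. What you actually prove is that the class $[\gamma]$ dies under the bonding map $j_*\colon H_1(\Rips(X,r_0))\to H_1(\Rips(X,r))$, i.e.\ that $j$ (equivalently the canonical map $\iota_r$) is not a homotopy equivalence. That does not rule out an \emph{abstract} homotopy equivalence $\Rips(X,r)\simeq X$, which is what the theorem denies. Your attempted repair is circular: you claim $\phi\circ\iota_r$ must induce an isomorphism on $H_1$ because it agrees with $(\iota_{r_0}^{-1})_*\circ j_*\circ(\iota_{r_0})_*$ "up to change of generators" --- but that composite is an isomorphism only if $j_*$ is injective, which is precisely what you have just disproved, so no contradiction is produced. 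To close the argument one needs to know the actual group $H_1(\Rips(X,r))$ (a priori new one-dimensional classes could be born at scale $r$ and restore the rank; cf.\ $\Rips(S^1,r)\simeq S^3$ just above the critical scale). This is why the paper does not argue directly here but instead invokes the computation of the full first persistent homology of Rips filtrations of metric graphs from \cite{ZV} and \cite{7A}: all $H_1$-bars are born at scale $0$ and the bar corresponding to the shortest loop dies at $\ell(X)/3$, so for $r>\ell(X)/3$ one has $\rank H_1(\Rips(X,r))<\rank H_1(X)$ and the spaces cannot be homotopy equivalent. Without importing that (or an equivalent computation), your reverse implication is incomplete.
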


\begin{proof}
Using the notation of Proposition \ref{Prop:MetricGraph}(4) define a closed cover of $X$ in the following way: 
$$
\U =\{U' \subset X \mid U\subset X, U \textrm{ finite}, \diam(U)<r\}.
$$ 
By Theorem \ref{Thm:RipsNerve1} it suffices to show that $\U$ is a good numerable cover of $X$. Cover $\U$ is numerable as its interiors cover $X$. Furthermore, each element of $\U$ is geodesically convex of diameter less than $\ell(X)/3$, hence so are all finite intersections of elements of $\U$. Proposition \ref{Prop:MetricGraph}(2) then implies $\U$ is a good cover.

For $r > \ell(X)/3$ the first homology groups of $\Rips(X,t)$ and $X$ are known to differ, see the main results of \cite{ZV} or \cite{7A} for details.
\end{proof}

Again, the same argument can be constructed for closed Rips complexes to prove the following result.

\begin{Thm}
\label{Thm:RipsGraphClosed}
 Suppose $X$ is a metric graph and $r>0$. Then $\cRips(X,r) \simeq X$ iff $r < \ell(X)/3$.
\end{Thm}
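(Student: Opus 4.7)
The plan is to mimic the proof of Theorem \ref{Thm:RipsGraph}, replacing the strict diameter bound in the defining cover with a non-strict one and correspondingly applying the closed-Rips half of Theorem \ref{Thm:RipsNerve}. Concretely, I would define
$$\U = \{U' \subset X \mid U \subset X \text{ finite},\ \diam(U) \leq r\},$$
where $U'$ denotes the geodesic hull from Proposition \ref{Prop:MetricGraph}(4). The assumption $r < \ell(X)/3$ enters immediately: any finite $U$ with $\diam(U) \leq r$ satisfies $\diam(U) < \ell(X)/3$, so Proposition \ref{Prop:MetricGraph}(4) applies and produces a geodesically convex subtree $U'$ with $\diam(U') = \diam(U) \leq r$.

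Next I would verify that $\U$ satisfies the hypothesis of the closed part of Theorem \ref{Thm:RipsNerve}: a finite $\sigma \subset X$ lies in some element of $\U$ iff $\diam(\sigma) \leq r$. One direction is by taking $U = \sigma$, giving $\sigma \subset \sigma' \in \U$; the other follows since if $\sigma \subset U'$ then $\diam(\sigma) \leq \diam(U') \leq r$. Any finite intersection of elements of $\U$ is a geodesically convex subset of diameter at most $r < \ell(X)/3$ and is therefore a tree by Proposition \ref{Prop:MetricGraph}(2), so $\U$ is a good cover. Numerability follows as in the open case since the interiors of the elements of $\U$ (e.g.\ open geodesic segments through each point) cover the paracompact space $X$. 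Theorem \ref{Thm:RipsNerve1} now yields $\cRips(X,r) \simeq X$.

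For the converse, when $r \geq \ell(X)/3$ I would invoke the references cited at the end of the proof of Theorem \ref{Thm:RipsGraph}, namely \cite{ZV,7A}, to conclude that $H_1(\cRips(X,r))$ differs from $H_1(X)$. The new phenomenon at the borderline $r = \ell(X)/3$ is that three equally spaced points on a shortest essential loop lie at pairwise distance exactly $\ell(X)/3$ and hence span a $2$-simplex in $\cRips$ but not in $\Rips$; this is precisely why the closed threshold is strict while the open threshold is not. The main obstacle of the proof is exactly this borderline case: Proposition \ref{Prop:MetricGraph}(4) requires strict inequality on the diameter, so the forward argument breaks down at $r = \ell(X)/3$, and one must separately confirm that reconstruction genuinely fails there rather than being merely inaccessible to this method.
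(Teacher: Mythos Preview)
Your proposal is correct and follows exactly the approach the paper indicates (the paper's own proof is merely the one-line remark that ``the same argument can be constructed for closed Rips complexes''). You have faithfully carried out that adaptation---replacing the strict diameter condition by $\diam(U)\leq r$, observing that $r<\ell(X)/3$ still lets Proposition~\ref{Prop:MetricGraph}(4) and (2) apply, and correctly isolating the borderline $r=\ell(X)/3$ as the place where the converse requires the additional observation about equally spaced points on a shortest loop.
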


\begin{Rem}
Suppose $X$ is a metric graph and $r>0$. Analogous arguments to those in the proof of Theorem \ref{Thm:RipsGraph} could be made to prove that  $\Cech(X,r) \simeq X$ iff $r \leq \ell(X)/4$. Similarly,   $\cCech(X,r) \simeq X$ iff $r < \ell(X)/4$.
\end{Rem}

\section{Functorial Dowker-Nerve diagram}
\label{Sect:Funct}

The results of the previous sections relate to complexes obtained at a fixed scale. In the context of filtrations however, we are interested in preserving or relating these results through various scales. With this aim we present in this section a functorial version of the Nerve Lemma  coupled with functorial Dowker duality spanning various scales. We call the result the Functorial Dowker-Nerve diagram, as it combines functorial versions of the Dowker Theorem and of the Nerve lemma.

Functorial versions of these results have been considered before. A functorial version of the Nerve Lemma appears in \cite{CO}  and later in \cite{CM} for pairs of finite good open covers of paracompact spaces. Approximate homological versions were obtained in \cite{GS} and \cite{CS}. On the other hand, a functorial version of the Dowker duality was proved in \cite{CM}. The functorial versions of the Nerve Lemma (Lemma \ref{Lem:FunctorialNerve}) and of the Dowker duality (Theorem \ref{Thm:FDowker1}) presented in this paper are more general than the previously known versions.

We first introduce a notation of our functorial setting for the rest of this section. Let $f\colon X \to Y$ be a map. Suppose $\U=\{U_\alpha\}_{\alpha \in A}$ and $\W=\{W_\beta\}_{\beta \in B}$ are numerable covers of $X$ and $Y$ respectively, so that $f(\U)$ is a refinement of $\W$, i.e., 
  $\forall \alpha\in A, \ \exists \beta_\alpha \in B: f(U_\alpha) \subset W_{\beta_\alpha}$.

Define the following maps: 
\begin{itemize}
 \item $\f_\N \colon \N(\U) \to \N(\W)$ is the simplicial map  mapping $U_\alpha \mapsto W_{\beta_\alpha}$. While $\f_\N$ depends on the choice of pairing $\alpha \mapsto \beta_\alpha$ (each  $f(U_\alpha)$ can typically be contained in more than one element of $\W$), its homotopy type does not. To see this note that a different choice of pairing induces a contiguous (hence homotopic) map.
 \item $\f_\V \colon \V(\U) \hookrightarrow \V(\W)$ is the simplicial map mapping $x \mapsto f(x)$.
 \item maps $i_\U \colon X \to \N(\U)$ and $i_\W \colon Y \to \N(\W)$ arise via locally-finite partitions of unity subordinated to $\U$ and $\W$ as explained in Section \ref{Sec:PartUnity}.
\end{itemize}

\begin{Lem}
\label{Lem:FunctorialNerve}
 [Functorial Nerve Lemma]
 The following diagram commutes up to homotopy:
  $$
 \xymatrix{
 Y \ar[r]^{i_\W}&  \N(\W)  \\
 X \ar[r]_{i_\U} \ar[u]^{f}& \N(\U) \ar[u]_{\f_\N}
 }
 $$
 Furthermore, if $\U$ and $\W$ are good covers, then $i_\W$ and $i_\U$ are homotopy equivalences, hence $f$ is a homotopy equivalence iff $\f_\N$ is.
\end{Lem}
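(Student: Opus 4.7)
The plan is to exploit the partition of unity description of $i_\U$ and $i_\W$ and reduce the homotopy commutativity to a contiguity argument, then invoke the Nerve Lemma (Theorem \ref{Thm:Nerve}) for the second claim.

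First I would fix locally finite partitions of unity $\{f_\alpha\}_{\alpha\in A}$ on $X$ subordinated to $\U$ and $\{g_\beta\}_{\beta\in B}$ on $Y$ subordinated to $\W$. By the discussion in Section \ref{Sec:PartUnity}, these realize $i_\U$ and $i_\W$ as maps whose barycentric coordinates are $\lambda_{U_\alpha}(i_\U(x))=f_\alpha(x)$ and $\lambda_{W_\beta}(i_\W(y))=g_\beta(y)$. Then for $x\in X$, unwinding the definition of $\f_\N$ one computes
$$
\lambda_{W_\beta}(\f_\N \circ i_\U(x)) \;=\; \sum_{\alpha\,:\,\beta_\alpha = \beta} f_\alpha(x),
\qquad
\lambda_{W_\beta}(i_\W\circ f(x))\;=\; g_\beta(f(x)).
$$
Both sums (resp.\ values) are supported on finite subsets of $B$ by local finiteness.

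Next I would show these two maps $X \to \N(\W)$ are contiguous. For a fixed $x$, whenever $f_\alpha(x)>0$ we have $x\in\supp(f_\alpha)\subset U_\alpha$, hence $f(x)\in f(U_\alpha)\subset W_{\beta_\alpha}$; whenever $g_\beta(f(x))>0$ we have $f(x)\in\supp(g_\beta)\subset W_\beta$. Therefore every $W_\beta$ appearing with positive barycentric coordinate in either of the two points above contains $f(x)$, so the (finite) union of the two supports intersects in at least the point $f(x)$, and hence spans a simplex of $\N(\W)$. Thus the two maps factor through a common simplex at every $x$, i.e., they are contiguous. Applying the fact recalled in Section \ref{Sec:PartUnity} that contiguous maps are homotopic (via the straight-line homotopy in the metric topology, which is homotopy equivalent to the weak topology), one obtains $\f_\N\circ i_\U\simeq i_\W\circ f$, giving commutativity of the diagram up to homotopy.

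For the final assertion, assuming $\U$ and $\W$ are good numerable covers, the Nerve Lemma (Theorem \ref{Thm:Nerve}) applies to both and yields that $i_\U$ and $i_\W$ are homotopy equivalences. Combined with the homotopy commutative square and the standard two-out-of-three property for homotopy equivalences, this immediately gives that $f$ is a homotopy equivalence if and only if $\f_\N$ is.

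The main subtlety, as usual in this kind of comparison, is keeping the two topologies on an infinite simplicial complex straight: the straight-line contiguity homotopy is only continuous in the metric topology $\N(\W)_m$, so one must cite the fact that the identity $\N(\W)\to\N(\W)_m$ is a homotopy equivalence to transfer the conclusion back to the weak topology. Aside from this bookkeeping, the argument is essentially a direct check that both maps have barycentric coordinate supports contained in the carrier simplex of $f(x)$ in $\N(\W)$.
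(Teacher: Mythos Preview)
Your proof is correct and follows exactly the same approach as the paper's own argument: the paper's proof simply states that $i_\W\circ f$ and $\f_\N\circ i_\U$ are contiguous hence homotopic, and invokes Theorem \ref{Thm:Nerve} for the second claim. Your write-up is a fully unpacked version of this, with the contiguity check made explicit via the partition-of-unity description of barycentric coordinates and with the weak/metric topology bookkeeping spelled out.
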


\begin{proof}
 Maps $i_\W \circ f$ and $\f_\N \circ i_\U$ are contiguous hence homotopic. For the second part use Theorem \ref{Thm:Nerve}. 
\end{proof}

We will also require a version of the Dowker Theorem, which is functorial in our setting described above. 

\begin{Thm}\label{Thm:FDowker1}
  [The Functorial Dowker Theorem  for covers].
 There exist homotopy equivalences $\tilde \gamma_\U$ and $\tilde \gamma_\W$ for which the following diagram commutes up to homotopy:
 $$
 \xymatrix{
 \N(\W) &\V(\W) \ar[l]_{\tilde \gamma_\W}\\
 \N(\U)  \ar[u]_{\f_\N}&\V(\U) \ar[u]_{\f_\V}\ar[l]^{\tilde\gamma_\U}}
 $$
 The statement is true even if $\U$ and $\W$ are not numerable.
\end{Thm}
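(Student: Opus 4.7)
The plan is to realize $\tilde\gamma_\U$ and $\tilde\gamma_\W$ as explicit simplicial maps from the barycentric subdivisions of the Vietoris complexes via a choice function, and then verify that the square commutes up to homotopy by a contiguity argument. For each simplex $\sigma$ of $\V(\U)$ I would fix a choice $U_\sigma \in \U$ with $\sigma \subset U_\sigma$ (such a choice exists by the definition of $\V(\U)$), and analogously $W_\tau \in \W$ with $\tau \subset W_\tau$ for each simplex $\tau$ of $\V(\W)$. Define $\tilde\gamma_\U$ as the simplicial map from the barycentric subdivision $\V(\U)'$ to $\N(\U)$ sending the barycenter $\hat\sigma$ to the vertex $U_\sigma$. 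This is well-defined because along any chain $\sigma_0 \subsetneq \cdots \subsetneq \sigma_k$ in $\V(\U)'$, each $U_{\sigma_i}$ contains $\sigma_0 \neq \emptyset$, so $\{U_{\sigma_0},\dots,U_{\sigma_k}\}$ spans a simplex of $\N(\U)$. Under the canonical identification $|\V(\U)| = |\V(\U)'|$ this yields a continuous map $\V(\U) \to \N(\U)$; construct $\tilde\gamma_\W$ analogously.

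To see $\tilde\gamma_\U$ is a homotopy equivalence, the Dowker duality (Theorem \ref{Thm:Dowker}) produces one as the nerve map for the closed cover of $\V(\U)$ by the full contractible subcomplexes $K_U = \{x \in X : x \in U\}$, $U \in \U$, whose nerve is precisely $\N(\U)$; the same $\sigma_0$-containment argument shows that any two choice-based maps of our form are pairwise contiguous and hence homotopic, so $\tilde\gamma_\U$ is homotopic to the canonical nerve map and is therefore itself a homotopy equivalence. The argument is purely combinatorial, which accounts for the final sentence of the statement.

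Finally, I would verify the square commutes up to homotopy by exhibiting a contiguity between $\f_\N \circ \tilde\gamma_\U$ and $\tilde\gamma_\W \circ \f_\V$, viewed as simplicial maps on the common refinement $\V(\U)'$ (the simplicial map $\f_\V$ induces $(\f_\V)' \colon \V(\U)' \to \V(\W)'$, $\hat\sigma \mapsto \widehat{f(\sigma)}$, which agrees with $\f_\V$ on geometric realizations). On a chain $\sigma_0 \subsetneq \cdots \subsetneq \sigma_k$, the first composition sends $\hat\sigma_i \mapsto W_{\beta_{\alpha(\sigma_i)}}$ (where $U_{\sigma_i} = U_{\alpha(\sigma_i)}$ and $f(U_{\alpha(\sigma_i)}) \subset W_{\beta_{\alpha(\sigma_i)}}$), while the second sends $\hat\sigma_i \mapsto W_{f(\sigma_i)}$. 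Each of the $2(k+1)$ resulting vertices of $\N(\W)$ contains $f(\sigma_0)$, since $W_{\beta_{\alpha(\sigma_i)}} \supset f(U_{\sigma_i}) \supset f(\sigma_0)$ and $W_{f(\sigma_i)} \supset f(\sigma_i) \supset f(\sigma_0)$; as $f(\sigma_0) \neq \emptyset$, they all lie in a single simplex of $\N(\W)$, giving contiguity and hence homotopy.

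The main obstacle I anticipate is pinning down a single explicit Dowker equivalence against which the contiguity-of-choices trick can compare our $\tilde\gamma$'s; the cleanest route is a careful execution of the Nerve Lemma argument sketched just after Theorem \ref{Thm:Dowker} for the cover $\{K_U\}_{U \in \U}$, noting that paracompactness of simplicial complexes with the weak topology allows an open thickening of $\{K_U\}$ that is numerable, so that the argument goes through without any numerability hypothesis on $\U$ or $\W$ themselves.
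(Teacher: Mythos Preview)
Your proposal is essentially correct and takes a genuinely different route from the paper. The paper builds $\tilde\gamma_\U$ and $\tilde\gamma_\W$ as partition-of-unity nerve maps: it covers $\V(\U)$ by the full subcomplexes $\Delta_{U_\alpha}$, thickens each by $0.1$ in every adjacent simplex to obtain a good \emph{numerable} cover $\widetilde\U$ of $\V(\U)$ with $\N(\widetilde\U)\cong\N(\U)$, sets $\tilde\gamma_\U=i_{\widetilde\U}$, and then reads off both the homotopy-equivalence claim and the commutativity of the square directly from Lemma~\ref{Lem:FunctorialNerve} (applied to $\f_\V\colon\V(\U)\to\V(\W)$ and the pair $\widetilde\U,\widetilde\W$). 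Your approach is the classical barycentric-subdivision/choice-function construction (Dowker's original, and the one used in \cite{CM}): it is more combinatorial, gives $\tilde\gamma_\U$ as an explicit simplicial map on $\V(\U)'$, and your contiguity check for $\f_\N\circ\tilde\gamma_\U\simeq\tilde\gamma_\W\circ\f_\V$ via the common point $f(\sigma_0)$ is clean and correct. What your route buys is that it never touches partitions of unity at all; what the paper's route buys is that one invocation of Lemma~\ref{Lem:FunctorialNerve} simultaneously yields the equivalences and the square.

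One loose end: your justification that $\tilde\gamma_\U$ is a homotopy equivalence leans on ``$\tilde\gamma_\U$ is homotopic to the canonical nerve map'' via the $\sigma_0$-containment trick. That trick shows any two \emph{choice-based} maps of your form are contiguous, but the partition-of-unity nerve map $i_{\widetilde\U}$ is not one of them, and for a point $p$ in a barycentric simplex over a chain $\sigma_0\subsetneq\cdots\subsetneq\sigma_k$ the sets $U$ with $p\in\tilde\Delta_U$ need not all contain $\sigma_0$, so contiguity with $i_{\widetilde\U}$ is not immediate. The standard fix (and the one matching your combinatorial style) is to avoid the comparison altogether: define the dual choice map $\psi\colon\N(\U)'\to\V(\U)$ by $\widehat{\{U_{i_0},\ldots,U_{i_m}\}}\mapsto x\in\bigcap_j U_{i_j}$, and check that $\tilde\gamma_\U\circ\psi$ and $\psi\circ\tilde\gamma_\U$ are each contiguous (on second subdivisions) to the identity. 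This is exactly Dowker's original argument and closes the gap without any appeal to numerability or the Nerve Lemma.
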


\begin{proof}
For any $C \subset X$ let $\Delta_C $ denote the full simplicial complex on $C$, i.e., each finite subset of $C$ spans a simplex in $\Delta_C$. Recall that full simplicial complexes are contractible and note that if $C\subset U$ for some $U\in \U$, then $\Delta_C \subset \V(\U)$.
 
We see that  $\{\Delta_{U_\alpha}\}_{\alpha\in A}$ is a good cover of $\V(\U)$ as for each $A'\subset A$ the intersection
 $$
 \bigcap_{\alpha \in A'}\Delta_{U_\alpha}=\Delta_{(\cap_{\alpha \in A'}U_\alpha)}
 $$
 is a full simplicial complex. However, it may not be numerable, as its interiors may not cover $\V(\U)$. To remedy this shortcoming we modify it by slightly thickening the elements of the cover. For a moment let us assume that each simplex of $\V(\U)$ is isometric to the standard simplex. We enlarge each $\Delta_{U_\alpha}$ to $\tilde\Delta_{U_\alpha}$ so that for each simplex $\sigma\in \V(\U)$, 
 $$
 \tilde\Delta_{U_\alpha} \cap \sigma = N(\Delta_{U_\alpha}, 0.1) \cap \sigma,
 $$
 i.e, we thicken the sets by $0.1$ in each adjacent simplex. Note that
for each $A'\subset A$ the intersection $\cap_{\alpha \in A'}\tilde \Delta_{U_\alpha}$ deformation contracts to $\cap_{\alpha \in A'}\Delta_{U_\alpha}$. In particular, $\cap_{\alpha \in A'}\tilde \Delta_{U_\alpha}= \emptyset $ iff $\cap_{\alpha \in A'}\Delta_{U_\alpha}= \emptyset $ and $\widetilde \U = \{\tilde \Delta_{U_\alpha}\}_{\alpha \in A}$ is a good numerable cover of $\V(\U)$. Furthermore, $\N(\U)$ is isomorphic to $\N(\widetilde \U)$ by identifying vertices $U \in \N(\U)$ with $\tilde \Delta_{U}\in \N(\widetilde \U)$. Using this identification and Lemma \ref{Lem:FunctorialNerve}  we define a homotopy equivalence $\tilde \gamma_\U = i_{\widetilde U}\colon \V(\U) \to \N(\U)$. In the same way we define a  homotopy equivalence $\tilde \gamma_\W = i_{\widetilde W}\colon \V(\W) \to \N(\W)$.
As $\f_\N (\widetilde U)$ is a refinement of $\widetilde W$, 
 the diagram commutes up to homotopy by Lemma \ref{Lem:FunctorialNerve}.

Throughout the proof we never required $\U$ or $\W$ to be numerable.
\end{proof}

\begin{Rem}
 Note that Theorem \ref{Thm:FDowker1} is stronger than the Functorial Dowker Theorem of \cite{CM}. The setting of \cite{CM} requires nested relations on a fixed product $Y \times Z$ which, by the theory of Section \ref{Sect:Dowker}, translates to covers $\U=\{U_\alpha\}_{\alpha \in A}$ and $\W=\{W_\alpha\}_{\alpha \in A}$ of $X$ with $U_\alpha \subset W_\alpha$. In particular, in that setting the covers have to be indexed by the same set, nesting has to precisely respect the indexing, and covers have to be of the same space (hence  function $f$ is the identity). 
\end{Rem}

We may now combine the results of Lemma \ref{Lem:FunctorialNerve} and Theorem \ref{Thm:FDowker1} into a single diagram. For the sake of completeness we also include a description of the setting of this section. 

\begin{Thm}
 [Functorial Dowker-Nerve diagram]
 \label{Thm:NerveNested}
Let $f\colon X \to Y$ be a map. Suppose $\U=\{U_\alpha\}_{\alpha \in A}$ and $\W=\{W_\beta\}_{\beta \in B}$ are numerable covers of $X$ and $Y$ respectively, so that
  $\forall \alpha\in A, \ \exists \beta_\alpha \in B: f(U_\alpha) \subset W_\beta$.

Define the following maps: 
\begin{itemize}
 \item $\f_\N \colon \N(\U) \to \N(\W)$ is the simplicial map  mapping $U_\alpha \mapsto W_{\beta_\alpha}$;
 \item $\f_\V \colon \V(\U) \hookrightarrow \V(\W)$ is the simplicial map  mapping $x \mapsto f(x)$;
  \item maps $i_\U \colon X \to \N(\U)$ and $i_\W \colon Y \to \N(\W)$ arise via locally-finite partitions of unity subordinated to $\U$ and $\W$;
 \item homotopy equivalences $\gamma_\U \colon \N(\U)\to \V(\U)$ and $\gamma_\W \colon \N(\W)\to \V(\W)$ arise from Theorem \ref{Thm:FDowker1} as homotopy inverses of $\tilde \gamma_\U$ and $\tilde \gamma_\W$.
\end{itemize}
Then the following diagram commutes up to homotopy:
 $$
 \xymatrix{
 Y \ar[r]^{i_\W}&  \N(\W) \ar[r]^{\gamma_\W}&\V(\W) \\
 X \ar[r]_{i_\U} \ar[u]^{f}& \N(\U) \ar[r]_{\gamma_\U} \ar[u]_{\f_\N}&\V(\U) \ar[u]_{\f_\V}}
 $$
Furthermore, if $\U$ and $\W$ are good covers, then $i_\U$ and $i_\W$ are homotopy equivalences. In this case the following holds: if any  of the maps $\{f,\f_\N, \f_\V\}$ is a homotopy equivalence, then all three are.
\end{Thm}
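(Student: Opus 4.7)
The plan is to assemble the desired diagram by gluing the two squares that have already been established: the left square is exactly the content of the Functorial Nerve Lemma (Lemma \ref{Lem:FunctorialNerve}), and the right square arises from Theorem \ref{Thm:FDowker1} once its horizontal arrows are inverted up to homotopy. So essentially no new geometric input is needed; the work is bookkeeping the homotopies and then running a standard two-out-of-three argument.

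First I would record the left square. Lemma \ref{Lem:FunctorialNerve} directly gives $i_\W \circ f \simeq \f_\N \circ i_\U$, which is the left square of the diagram. Next, for the right square, Theorem \ref{Thm:FDowker1} provides homotopy equivalences $\tilde\gamma_\U \colon \V(\U) \to \N(\U)$ and $\tilde\gamma_\W \colon \V(\W) \to \N(\W)$ with $\tilde\gamma_\W \circ \f_\V \simeq \f_\N \circ \tilde\gamma_\U$. Defining $\gamma_\U$ and $\gamma_\W$ as homotopy inverses of $\tilde\gamma_\U$ and $\tilde\gamma_\W$ and pre/post-composing the relation above gives $\f_\V \circ \gamma_\U \simeq \gamma_\W \circ \f_\N$, i.e. the right square commutes up to homotopy. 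Stacking the two squares then yields homotopy commutativity of the full Dowker-Nerve diagram.

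For the furthermore part, if $\U$ and $\W$ are good covers, Theorem \ref{Thm:Nerve} gives that $i_\U$ and $i_\W$ are homotopy equivalences; $\gamma_\U, \gamma_\W, \f_\N, \f_\V, f$ fit into the commutative squares as already established. Chasing the left square yields $i_\W \circ f \simeq \f_\N \circ i_\U$ with both $i_\U, i_\W$ homotopy equivalences; hence $f$ is a homotopy equivalence iff $\f_\N$ is by two-out-of-three (or equivalently, by conjugating $\f_\N$ by the inverses of $i_\U, i_\W$). Chasing the right square similarly yields that $\f_\N$ is a homotopy equivalence iff $\f_\V$ is, since $\gamma_\U, \gamma_\W$ are always homotopy equivalences. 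Composing these biconditionals gives the desired three-way equivalence.

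The main obstacle, such as it is, is purely notational: the Dowker square proved in Theorem \ref{Thm:FDowker1} is written with arrows pointing from $\V(\cdot)$ to $\N(\cdot)$, while the diagram in the theorem statement orients them the other way, so I must be careful that inverting both horizontal equivalences genuinely produces a commuting square and that the composed homotopies align with the two-out-of-three argument. Beyond this, there is no new content, since all substantive work — constructing $\tilde\gamma_\U$ as $i_{\widetilde\U}$ via thickened stars, verifying that $\f_\N$ is well defined up to homotopy, and applying the Nerve Lemma to compare spaces with their nerves — has already been carried out in Lemma \ref{Lem:FunctorialNerve} and Theorem \ref{Thm:FDowker1}.
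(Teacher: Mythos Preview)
Your proposal is correct and matches the paper's approach exactly: the paper presents this theorem without a separate proof, explicitly framing it as the combination of Lemma~\ref{Lem:FunctorialNerve} and Theorem~\ref{Thm:FDowker1}, which is precisely what you do. Your only addition is spelling out the arrow-inversion bookkeeping for the right square and the two-out-of-three argument for the final clause, both of which are routine and consistent with the paper's intent.
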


\begin{Rem}
A consequence of Theorem \ref{Thm:NerveNested} is that in the case of good numerable covers $\U$ and $\W$, $f$ is a homotopy equivalence iff $\f_\N$ or $\f_\V$ is. In effect this means that we can decide whether $f$ is a homotopy equivalence by considering (testing) only the induced maps on the nerve or Vietoris complex.
\end{Rem}

By  Theorem \ref{Thm:NerveNested}, the existence of good covers at small scale implies all sorts of homotopy equivalences between the induced complexes. Here we provide some of them in two specific cases. 

\begin{Thm}
 [Local Rips-\v Cech coincidence]
 \label{Thm:RipsCech}
 Suppose $X$ is: 
\begin{enumerate}
 \item a metric graph and $0<r\leq\ell(X)/4$, or
\item   a geodesic space with $r(X)>0$ and $0\leq r(X)/2$.
\end{enumerate}
Then the inclusions
$\Cech(X,r/2) \hookrightarrow \Rips (X,r) \hookrightarrow \Cech(X,r)$ are homotopy equivalences. 

The same statement for closed complexes holds if $0<r<\ell(X)/3$, or $0< r(X)/2$ respectively.
\end{Thm}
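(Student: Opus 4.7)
The plan is to present both inclusions as instances of the Functorial Dowker-Nerve diagram (Theorem \ref{Thm:NerveNested}), applied to $f = \text{id}_X$ and to three carefully chosen covers whose Vietoris complexes realize the three complexes in the statement.

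Let $\U_1 = \{B(x, r/2) : x \in X\}$, let $\U_2 = \U_1 \cup \{\widehat\sigma : \sigma \subset X \text{ finite}, \diam(\sigma) < r\}$, where $\widehat\sigma$ denotes the tree-span $\sigma'$ from Proposition \ref{Prop:MetricGraph}(4) in case (1) and the geodesically convex open set $\sigma_\infty$ of Proposition \ref{Prop:Convex} in case (2), and let $\U_3 = \{B(x, r) : x \in X\}$. A direct check (using that a finite set lies in an open ball of radius $s$ iff the corresponding $s$-balls have a common point) shows $\V(\U_1) = \Cech(X, r/2)$, $\V(\U_2) = \Rips(X, r)$, and $\V(\U_3) = \Cech(X, r)$. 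Pair each element of $\U_1$ with itself inside $\U_2$, and pair each $U \in \U_2$ with some element of $\U_3$ containing $U$ (take $B(x, r/2) \subset B(x, r)$, and $\widehat\sigma \subset B(a, r)$ for any $a \in \widehat\sigma$, since $\diam(\widehat\sigma) < r$). The induced maps $\f_\V$ from Theorem \ref{Thm:NerveNested} are the identity on vertices and thus coincide with the two simplicial inclusions in the theorem.

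The second step is to verify that $\U_1$, $\U_2$, $\U_3$ are good numerable covers in both settings. In case (2), all balls of radius at most $r \leq r(X)/2$ and all hulls $\widehat\sigma$ are geodesically convex (by Definition \ref{Def:rM}(2) and Proposition \ref{Prop:Convex}), hence so are finite intersections; since all relevant pairwise distances remain below $2r(X)$, Definition \ref{Def:rM}(1) provides unique geodesics and the straight-line homotopy of Definition \ref{Def:rM}(3) contracts each intersection to a chosen basepoint. In case (1), every element and every finite intersection is a geodesically convex subset of diameter less than $\ell(X)/2$, hence a tree by Proposition \ref{Prop:MetricGraph}(2), and therefore contractible. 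Numerability is immediate from paracompactness of $X$ and the fact that the open balls $B(x, r/2)$ already open-cover $X$. With this verification in hand, Theorem \ref{Thm:NerveNested} applied to the pairs $(\U_1, \U_2)$ and $(\U_2, \U_3)$ promotes the homotopy equivalence $\text{id}_X$ to homotopy equivalences of the Vietoris maps, yielding exactly the two inclusions claimed. The closed-complex version is obtained identically after replacing strict diameter bounds with non-strict ones and invoking Theorems \ref{Thm:Hausmann1} and \ref{Thm:RipsGraphClosed} in place of their open analogues.

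The chief obstacle is the contractibility verification for $\U_3$: pairwise intersections of radius-$r$ balls can have diameter approaching $2r$, placing them at the edge of the convexity range, which forces one to carefully deploy all three clauses of Definition \ref{Def:rM} in case (2) and of Proposition \ref{Prop:MetricGraph}(1)-(2) in case (1) to keep every intersection contractible. Once this is secured, the remaining bookkeeping through Theorem \ref{Thm:NerveNested} is routine.
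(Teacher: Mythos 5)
Your proof is correct and follows the same overall route as the paper: take three covers whose Vietoris complexes are $\Cech(X,r/2)$, $\Rips(X,r)$ and $\Cech(X,r)$, apply Theorem \ref{Thm:NerveNested} to $f=\mathrm{id}_X$, and reduce everything to goodness of the covers. The one genuine difference is your choice of the middle cover $\U_2=\U_1\cup\{\widehat\sigma\}$. The paper instead uses only the good cover $\W$ of Theorems \ref{Thm:Hausmann}/\ref{Thm:RipsGraph} and asserts that the covers are ``appropriately nested''; but every element of $\W$ has diameter strictly less than $r$, while an open ball of radius $r/2$ can have diameter equal to $r$, so the element-wise refinement $\U^{r/2}\prec\W$ demanded by the hypotheses of Theorem \ref{Thm:NerveNested} need not hold literally (only the induced simplicial inclusion on Vietoris complexes is automatic). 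Your union trick makes the first refinement tautological, at the modest price of checking that the enlarged cover is still good, which you do correctly: the added $r/2$-balls are geodesically convex in both settings, $\V(\U_2)$ is still $\Rips(X,r)$ by Theorem \ref{Thm:RipsNerve}, and numerability is free from the open balls. So your argument is, if anything, slightly tighter than the paper's. Two caveats you share with the paper: (i) the goodness of the $r$-ball cover in the metric-graph case needs the tripod argument from the proof of Proposition \ref{Prop:MetricGraph}(3) pushed past the stated $\ell(X)/3$ diameter bound (three points of $B(c,r)$ can have diameter up to $2r\leq\ell(X)/2$), so citing only clauses (1)--(2) is not quite enough; (ii) the closed-complex range $0<r<\ell(X)/3$ in the statement is too generous for $\cCech(X,r)\simeq X$ (the paper's own remark after Theorem \ref{Thm:RipsGraphClosed} requires $r<\ell(X)/4$), but that is an issue with the statement itself rather than with your proof.
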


\begin{proof}
Let $\U^r$ and $\U^{r/2}$ be the covers of $X$ by all open balls of radius $r$ and $r/2$ respectively. Note that $\Cech(X,r/2)=\N(\U^{r/2})=\V(\U^{r/2})$ and $\Cech(X,r)=\N(\U^{r})=\V(\U^{r})$. Let $\W$ be a good open cover of $X$ from Theorems \ref{Thm:Hausmann} or \ref{Thm:RipsGraph} respectively, so that $\Rips(X,r)= \V(\W)$.
The conclusion holds by Theorem \ref{Thm:NerveNested} since maps $\f_\V$ are inclusions and the covers are appropriately nested.
 \end{proof}

\begin{Rem}
 The passages of parameter from $r/2$ to $r$ to $r$ in $\Cech(X,r/2) \hookrightarrow \Rips (X,r) \hookrightarrow \Cech(X,r)$ are essentially due to two facts: 
\begin{itemize}
 \item  that balls of radius $r/2$ are of diameter $r$;
 \item that subsets of diameter $r$ are contained in a ball of radius $r$. 
\end{itemize}
The second parameter passage can be tightened in specific cases (for example in Euclidean spaces) by the versions of the Jung Theorem.
\end{Rem}

\begin{Thm}
 [Initial persistence invariance for Rips complexes]
 Suppose $X$ is: 
\begin{enumerate}
 \item a metric graph and $0<r_1 \leq r_2 < r_3=\ell(X)/3$, or
\item   a geodesic space with $r(X)>0$ and $0<r_1 \leq r_2 < r_3=r(X)/2$.
\end{enumerate}
Then the all maps in the following  (up to homotopy commutative) diagram are homotopy equivalences between spaces
$$
\xymatrix{
&&
\Rips(X,r_3)& \\
&&
\Rips(X,r_2) \ar@{^(->}[r] \ar@{^(->}[u]
&
\cRips(X,r_2)\ar@{_(->}[ul]
\\
X \ar[rr]_{\gamma_\U \circ i_\U}
 \ar[urr]_{\gamma_\W \circ i_\W}
  \ar[uurr]^{\gamma_\W' \circ i_\W'}&&
 \Rips(X,r_1) \ar@{^(->}[r] \ar@{^(->}[u]
 &
\cRips(X,r_1) \ar@{_(->}[ul] \ar@{^(->}[u]
}
$$
with maps $\gamma_\U \circ i_\U$, $\gamma_\W \circ i_\W$, and $\gamma_\W' \circ i_\W'$ arising from Theorem \ref{Thm:NerveNested} for appropriate covers described in Section \ref{Sect:Nerve}.
\end{Thm}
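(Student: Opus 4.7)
The plan is to prove all the asserted homotopy equivalences by iterated application of the Functorial Dowker--Nerve diagram (Theorem~\ref{Thm:NerveNested}) with $f = \mathrm{id}_X$ and a carefully chosen chain of nested good numerable covers of $X$, one tailored to each of the five simplicial complexes appearing in the diagram.

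The first step is to produce covers
$\U_1 \subset \overline{\U}_1 \subset \U_2 \subset \overline{\U}_2 \subset \U_3$
of $X$ such that the Vietoris complex of each equals the corresponding Rips (or closed Rips) complex, and each cover is good and numerable. In the metric graph case these covers are provided directly by the proofs of Theorem~\ref{Thm:RipsGraph} and Theorem~\ref{Thm:RipsGraphClosed}: they are indexed by finite subsets $U \subset X$ of diameter less than (respectively at most) the relevant scale, with set $U'$ the geodesic hull of $U$ from Proposition~\ref{Prop:MetricGraph}(4). In the geodesic setting the covers come from the proofs of Theorem~\ref{Thm:Hausmann} and Theorem~\ref{Thm:Hausmann1}, using Proposition~\ref{Prop:Convex} to attach to each finite $\sigma \subset X$ of appropriately small diameter a geodesically convex open neighborhood $\sigma_\infty$. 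The technical point is to fix \emph{once and for all} a single assignment $\sigma \mapsto \sigma_\infty$ that makes $\diam(\sigma_\infty)$ strictly smaller than $r_1$, $r_2$, or $r_3$ as appropriate; with this choice the nesting of the five covers follows from $r_1 \leq r_2 < r_3$.

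The second step is, for each adjacent pair $\U \subset \W$ in the chain, to apply Theorem~\ref{Thm:NerveNested} with $X = Y$ and $f = \mathrm{id}_X$. The refinement hypothesis $f(U_\alpha) \subset W_{\beta_\alpha}$ holds trivially, the induced map $\f_\V \colon \V(\U) \hookrightarrow \V(\W)$ is precisely the inclusion appearing in the diagram, and since each cover is good and numerable, Theorem~\ref{Thm:NerveNested} yields simultaneously that (i) the map $\gamma \circ i$ from $X$ to each of the Vietoris complexes is a homotopy equivalence (this handles the three arrows emanating from $X$), and (ii) the inclusion $\f_\V$ is a homotopy equivalence because $\mathrm{id}_X$ is one. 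Every remaining arrow in the diagram is either one of these basic edges or a composition of two of them (for example the diagonal $\cRips(X, r_i) \hookrightarrow \Rips(X, r_{i+1})$ factors through $\cRips(X,r_i) \hookrightarrow \cRips(X,r_{i+1}) \hookrightarrow \Rips(X,r_{i+1})$, or alternatively through $\Rips(X,r_i)$, both coming from adjacent pairs in our chain); hence all of them are homotopy equivalences. The up-to-homotopy commutativity of every triangle and square is exactly what Theorem~\ref{Thm:NerveNested} provides at each step, composed along the chain.

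The main obstacle is the scale-compatible choice of neighborhoods $\sigma_\infty$ in the geodesic setting: Proposition~\ref{Prop:Convex} only guarantees $\diam(\sigma_\infty) \leq q - \eps$ for an auxiliary $\eps$ that depends on the intended scale $q$, so one has to arrange the construction uniformly, ensuring that a single assignment $\sigma \mapsto \sigma_\infty$ works at all three scales $r_1, r_2, r_3$ simultaneously and produces honest set-theoretic inclusions of the five covers. Once this bookkeeping is in place, the remainder of the proof reduces to a mechanical diagram chase driven entirely by repeated use of Theorem~\ref{Thm:NerveNested}.
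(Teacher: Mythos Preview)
Your approach is exactly the paper's (the paper's proof is the single sentence ``Follows from Theorem~\ref{Thm:NerveNested} for appropriate covers described in Section~\ref{Sect:Nerve}''), and your elaboration of which covers to use and how they nest is correct, including the observation that in the geodesic case one must fix a single assignment $\sigma\mapsto\sigma_\infty$ so that the five covers are genuinely nested.

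One small slip: in your parenthetical example you have the roles of ``basic'' and ``composite'' edges reversed. In your chain $\U_1\subset\overline{\U}_1\subset\U_2\subset\overline{\U}_2\subset\U_3$ the diagonal $\cRips(X,r_i)\hookrightarrow\Rips(X,r_{i+1})$ is itself an adjacent-pair edge (coming from $\overline{\U}_i\subset\U_{i+1}$), whereas it is the \emph{vertical} arrows $\Rips(X,r_i)\hookrightarrow\Rips(X,r_{i+1})$ and $\cRips(X,r_i)\hookrightarrow\cRips(X,r_{i+1})$ that are two-step composites. Both factorizations you wrote contain a map going the wrong way ($\cRips$ at a scale contains, not is contained in, $\Rips$ at the same scale). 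This does not affect the argument, since your overall claim---that every arrow in the diagram is either a basic edge or a composite of two---remains true once the labels are swapped. Note also that the adjacent inclusion $\overline{\U}_1\subset\U_2$ (and hence the diagonal $\cRips(X,r_1)\hookrightarrow\Rips(X,r_2)$) requires $r_1<r_2$ strictly; the theorem statement allows $r_1=r_2$, in which case that arrow degenerates and the diagram should be read accordingly.
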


\begin{proof}
Follows from Theorem \ref{Thm:NerveNested} for appropriate covers described in Section \ref{Sect:Nerve}.
\end{proof}

\begin{Rem}
 The same theorem holds for \v Cech complexes as well with $X$ being a:
 \begin{enumerate}
 \item a metric graph and $0<r_1 \leq r_2 < r_3=\ell(X)/4$, or
\item   a geodesic space with $r(X)>0$ and $0<r_1 \leq r_2 < r_3=r(X)/4$.
\end{enumerate}
\end{Rem}

\begin{Rem}
While the initial persistence invariance results don't seem to differ much in the case of open filtrations as compared to the closed filtrations, the understanding of the underlying theoretical difference could have important consequence on computational setting, i.e., the case where such filtrations are approximated by finite subcomplexes. The results of \cite{AA} suggest that the open and closed filtrations of a geodesic circle differ by ephemeral summands, i.e., they differ only at a discrete set of points. These summands however seem to grow in size when approximated by finite sample \cite{AA}. A similar effect was observed in  \cite{ZV2}. We plan to delve deeper in that direction in the future research. 
\end{Rem}

\section{Reconstruction results by subsets}
\label{Sect:subsets}

Many of the results of the previous sections were aimed at reconstruction of the homotopy type of a space in terms of its Rips complex. In this section we focus on the reconstruction of $X$ or its homology using Rips complexes of subspaces.
 
Theorem \ref{Thm:C} states that the homotopy type of a sufficiently nice compact space can be reconstructed as the Rips complex of a countable subset $C$. It is an open question whether in the context of Theorem \ref{Thm:C}, the subset $C$ can be taken to be a finite subset of $X$. Such a result is known to hold for sufficiently nice Riemannian manifolds \cite{Lat} and certain subsets of the Euclidean space \cite{Att}. However, the currently known proofs of both of these results rely strongly on the underlying structure of the space (either a manifold structure or that of certain Euclidean neighborhoods). 

Let us introduce some new notation: if $\U$ is a cover of $X$ and $A\subset X$, then $\U_A=\{U \cap A \mid U \in \U\}$ is a cover of $A$.

\begin{Thm}
 \label{Thm:C}
Suppose $X$ is a compact metric space, $r>0$ and $\U$ is a good open cover of $X$ satisfying the following condition for	each finite subset $\sigma \subset X$:  $\sigma$ is contained in some element of $\U$ iff $\diam(\sigma)<r$. Then there exists a countable $C \subset X$ satisfying $\Rips(C,r)\simeq X$.
\end{Thm}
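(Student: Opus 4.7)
The plan is to apply Theorem \ref{Thm:RipsNerve} to $C$ and Theorem \ref{Thm:Nerve} to $X$, linking them through a single countable subcollection $\U^\ast \subset \U$ together with a countable $C \subset X$. Concretely, I aim to arrange (a) $\U^\ast_C$ satisfies the diameter-iff condition on $C$, so that $\Rips(C,r) \simeq \N(\U^\ast_C)$ by Theorem \ref{Thm:RipsNerve}; (b) for every finite $\mathcal{F}\subset \U^\ast$ one has $\bigcap \mathcal{F}\neq\emptyset$ iff $\bigcap_{U\in\mathcal{F}}(U\cap C)\neq\emptyset$, so that the restriction map $U \mapsto U\cap C$ identifies $\N(\U^\ast)$ and $\N(\U^\ast_C)$ as abstract simplicial complexes; and (c) $\U^\ast$ is a good numerable open cover of $X$, so that $\N(\U^\ast) \simeq X$ by Theorem \ref{Thm:Nerve}. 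Chaining the three conclusions then yields $\Rips(C,r) \simeq X$. In (c), goodness is inherited from $\U$ since $\U^\ast \subset \U$, and numerability is automatic for an open cover of the metric (hence paracompact) space $X$; the real work is the combinatorial construction of $C$ and $\U^\ast$ achieving (a) and (b) with both objects countable.

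I would build $C$ and $\U^\ast$ by a back-and-forth countable exhaustion. Initialize $C_0 \subset X$ as any countable dense subset (from separability of the compact metric space $X$) and $\U_0 \subset \U$ as any countable subcover of $X$ (from the Lindel\"of property). At each stage $n$, first for each finite $\sigma \subset C_n$ with $\diam(\sigma) < r$ select, using the hypothesis on $\U$, some $U_\sigma \in \U$ with $\sigma \subset U_\sigma$, and let $\U_{n+1}$ be $\U_n$ augmented with all such $U_\sigma$; second, for each finite $\mathcal{F}\subset \U_{n+1}$ with $\bigcap \mathcal{F} \neq \emptyset$ choose one representative point in $\bigcap \mathcal{F}$, and let $C_{n+1}$ be $C_n$ augmented with all such representatives. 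Both enlargements preserve countability, since the family of finite subsets of a countable set is countable. Set $C := \bigcup_n C_n$ and $\U^\ast := \bigcup_n \U_n$.

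Properties (a) and (b) then follow routinely. Any finite $\sigma \subset C$ lies in some $C_n$, so if $\diam(\sigma) < r$ the witness $U_\sigma$ selected at stage $n{+}1$ belongs to $\U^\ast$, giving the nontrivial direction of (a); the converse is immediate because $\U^\ast \subset \U$. Likewise any finite $\mathcal{F} \subset \U^\ast$ lies in some $\U_n$, so if $\bigcap \mathcal{F} \neq \emptyset$ a representative point was added to $C_{n+1} \subset C$, giving the nontrivial direction of (b); again the converse is immediate.

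The main obstacle is the circular coupling between (a) and (b): enlarging $\U^\ast$ to witness new low-diameter subsets of $C$ produces new nonempty finite intersections that $C$ must still sample, while enlarging $C$ produces new low-diameter finite subsets that $\U^\ast$ must still witness. The back-and-forth iteration resolves this cleanly in the countable limit; compactness of $X$ enters the argument only to supply separability and the Lindel\"of property used to seed the iteration.
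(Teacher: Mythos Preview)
Your argument is correct, and it takes a genuinely different route from the paper. The paper also performs a back-and-forth construction, but with \emph{finite} stages $X_n$ and $\U_n$ (extracting finite subcovers via compactness), and then concludes by assembling a large commutative ladder via the Functorial Dowker--Nerve Diagram (Theorem~\ref{Thm:NerveNested}): it shows each inclusion $\Rips(X_n,r)\hookrightarrow \Rips(X_{n+1},r)$ is a homotopy equivalence and passes to the colimit $\Rips(C,r)=\bigcup_n \Rips(X_n,r)$. Your proof instead works directly with the countable limit objects $C$ and $\U^\ast$ and applies Theorems~\ref{Thm:RipsNerve} and~\ref{Thm:Nerve} exactly once each, sandwiching the purely combinatorial identification $\N(\U^\ast_C)=\N(\U^\ast)$; no functoriality and no colimit argument are required. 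This is more elementary and also makes the extension to separable $X$ (the paper's Remark after Theorem~\ref{Thm:C}) immediate, since you only ever used Lindel\"of/separability, not compactness. What the paper's finite-stage approach buys in exchange is the structure needed for Theorem~\ref{Thm:H}: the finite sets $X_1\subset X_2$ with $\Rips(X_1,r)\hookrightarrow\Rips(X_2,r)$ a homotopy equivalence are produced along the way, whereas your limiting argument does not visibly isolate such a finite pair.

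One small point worth making explicit in your write-up: the identification $\N(\U^\ast)=\N(\U^\ast_C)$ relies on treating covers as \emph{indexed} families (as the paper does; see the Remark preceding Definition~\ref{Def:ComplexesOfCovers}), so that $\U^\ast_C=\{U\cap C\}_{U\in\U^\ast}$ is indexed by the same set as $\U^\ast$. With that convention your property (b) gives literal equality of abstract simplicial complexes on the common index set; without it, the map $U\mapsto U\cap C$ could collapse vertices and one would need an extra word to pass from a surjective simplicial map to a homotopy equivalence.
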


\begin{proof}
Note that  $\Rips(X,r)\simeq X$ by Theorem \ref{Thm:RipsNerve1}. 

Let $\U_1 \subset U$ be a finite subcover of $\U$ and choose a finite subset $X_1\subset X$ so that each non-trivial intersection of elements of $\U_1$ contains a point of $X_1$, i.e., $\N(\U_1|_{X_1})=\N(\U_1)$. 

We proceed by inductive definition of covers $\U_n$ and finite subsets $X_n$ for $n>1$:
\begin{itemize}
 \item Let $\U_n$ be a finite subcover of $\U$ containing $\U_{n-1}$, so that for each finite $\sigma \subset X_{n-1}$ of diameter less than $r$ there exists $U_\sigma \in \U_n$ containing $\sigma$. Consequently $\V(\U_n|_{X_{n-1}})=\Rips(X_{n-1},r)$.
 \item Let $X_n\subset X$ be a finite subset containing $X_{n-1}$, so that each non-trivial intersection of elements of $\U_{n}$ contains a point of $X_n$, i.e., $\N(\U_n|_{X_n})=\N(\U_n)$. 
\end{itemize}

Define $C=\cup_n X_n$ and note that $\Rips(C,r)=\cup_n \Rips(X_n,r)$. We can now construct the following diagram:

$$
\xymatrix
{
X \ar@{=}[dd] \ar[rr]&& 
\N(\U_1)=\N(\U_1|_{X_1}) \ar@{^(->}[d]\ar[rr]&& 
\V(\U_1|_{X_1})\ar@{^(->}[d]
\\
 && 
 \N(\U_2|_{X_1}) \ar@{^(->}[d] \ar[rr]&& 
 \V(\U_2|_{X_1}) = \Rips(X_1, r)\ar@{^(->}[d]
 \\
X \ar[rr]&& 
\N(\U_2)=\N(\U_2|_{X_2}) \ar@{^(->}[d]\ar[rr]
&& \V(\U_2|_{X_2})\ar@{^(->}[d]
\\
 && 
 \N(\U_3|_{X_2}) \ar[rr]&& 
 \V(\U_3|_{X_2})=\Rips(X_2,r)\ar@{.>}[d]
 \\
 && && \Rips(C,r)
}
$$
By Theorem \ref{Thm:NerveNested} the diagram commutes up to homotopy,  all horizontal maps are homotopy equivalences, and all inclusions $ \V(\U_{n+1}|_{X_{n}}) \to \V(\U_{n+2}|_{X_{n+1}})$ are homotopy equivalences. Consequently $ \cup_n \V(\U_{n+1}|_{X_{n}}) \simeq X$, which proves the theorem as $\cup_n \V(\U_{n+1}|_{X_{n}}) =\cup_n \Rips(X_n,r)=\Rips(C,r)$.
\end{proof}

\begin{Rem}
 Theorem \ref{Thm:C} actually holds even if $X$ is separable instead of compact, with only a minor adjustment to the proof. 
\end{Rem}

While it is unknown whether set $C$ of Theorem \ref{Thm:C} can be taken to be finite (i.e., whether we can construct the homotopy type of $X$ as the Rips complex of a finite subset), we can make use of a trick from  \cite{CO}  to reconstruct the homology of $X$ through nested finite Rips complexes in Theorem \ref{Thm:H}.

\begin{Lem}\label{Lem:CO}
 [Excerpt from Lemma 3.2 of \cite{CO}] If $A \to B \to C \to E \to F$ is a sequence of homomorphisms such that $\rank (A \to F) = \dim C$, then $\rank (B \to E) = \dim C$.
 
\end{Lem}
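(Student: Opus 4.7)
The plan is to reduce the statement to the elementary inequality $\rank(g \circ h) \leq \min\{\rank(g), \rank(h)\}$ for composable homomorphisms of vector spaces (or, more generally, the corresponding inequality for ranks of compositions of module homomorphisms). Denote the five maps in the sequence by $f_1\colon A\to B$, $f_2\colon B\to C$, $f_3\colon C\to E$, $f_4\colon E\to F$, and write compositions like $A \to E$ for $f_3 \circ f_2 \circ f_1$ and so on.

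First I would establish the upper bound on $\rank(B\to E)$. The map $B\to E$ is the composition $f_3 \circ f_2$ and therefore factors through $C$, which immediately gives
\[
\rank(B \to E) \leq \rank(C \to E) \leq \dim C.
\]
Next, I would bound $\rank(B\to E)$ from below using the hypothesis. Since $A \to F$ factors as $f_4 \circ (A \to E)$, the composition inequality yields $\rank(A \to F) \leq \rank(A \to E)$. Similarly, $A \to E$ factors as $(B \to E) \circ f_1$, so $\rank(A \to E) \leq \rank(B \to E)$. Chaining these estimates with the hypothesis $\rank(A\to F) = \dim C$ produces
\[
\dim C \;=\; \rank(A \to F) \;\leq\; \rank(A \to E) \;\leq\; \rank(B \to E) \;\leq\; \dim C,
\]
forcing equality throughout and yielding $\rank(B\to E) = \dim C$.

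There is no real obstacle here: the proof is a one-line diagram chase after one recognizes the two factorizations $A\to F = f_4 \circ (A\to E)$ and $A\to E = (B\to E)\circ f_1$. If the ambient category is that of abelian groups or modules rather than vector spaces (so that $\dim C$ is interpreted as the rank of $C$), the same argument applies verbatim, since the rank of a composition is still bounded by the ranks of its factors.
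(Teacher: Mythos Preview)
Your argument is correct: the two factorizations $A\to F = f_4\circ(A\to E)$ and $A\to E = (B\to E)\circ f_1$ together with the trivial bound $\rank(B\to E)\le \dim C$ (since $B\to E$ factors through $C$) sandwich $\rank(B\to E)$ between $\dim C$ and $\dim C$. Note that the paper does not supply its own proof of this lemma; it is simply quoted from \cite{CO}, so there is nothing to compare against beyond observing that your elementary rank-inequality argument is exactly the standard justification one would expect.
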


\begin{Thm}
 \label{Thm:H}
 Suppose $X$ is a compact metric space, $r>0$ and $\U$ is a good open cover of $X$ satisfying the following condition for	each finite subset $\sigma \subset X$:  $\sigma$ is contained in some element of $\U$ iff $\diam(\sigma)<r$. Then there exist finite subsets $X_1 \subset X_2 \subset X$ so that for each $k\in \NN$ and each field $\mathbb F$, $H_k(X,\mathbb F) \cong \im j_*$, where $j_* \colon H_k(\Rips(X_1,r),\mathbb F) \to H_k(\Rips(X_2,r),\mathbb F)$ is the inclusion induced homomorphism.
 \end{Thm}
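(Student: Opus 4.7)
The plan is to follow the inductive construction from the proof of Theorem \ref{Thm:C}, truncate after three steps, and replace the colimit argument with the rank trick of Lemma \ref{Lem:CO}. First I would choose finite subcovers $\U_1 \subset \U_2 \subset \U_3 \subset \U$ and finite subsets $X_1 \subset X_2 \subset X_3 \subset X$ exactly as in that proof, so that $\N(\U_n|_{X_n})=\N(\U_n)$ for $n=1,2,3$ and $\V(\U_{n+1}|_{X_n})=\Rips(X_n,r)$ for $n=1,2$. The subsets $X_1\subset X_2$ are the ones claimed by the theorem; $X_3$ is auxiliary. Because $\U_n$ is a finite good open cover of $X$, the Nerve Lemma yields $X\simeq \N(\U_n)$, so $H_k(X,\mathbb F)$ is finite-dimensional.

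Next I would apply the Functorial Dowker-Nerve diagram (Theorem \ref{Thm:NerveNested}) to the nested pairs $(\U_n,\U_{n+1})$ and assemble the homotopy-commutative zig-zag
$$
X\longrightarrow \Rips(X_1,r) \longrightarrow X \longrightarrow \Rips(X_2,r) \longrightarrow X,
$$
in which each map $X\to\Rips(X_n,r)$ is realised as $X\simeq \N(\U_n)\hookrightarrow \N(\U_{n+1}|_{X_n})\simeq \V(\U_{n+1}|_{X_n})=\Rips(X_n,r)$ and each map $\Rips(X_n,r)\to X$ as $\V(\U_{n+1}|_{X_n})\hookrightarrow \V(\U_{n+1}|_{X_{n+1}})\simeq \N(\U_{n+1}|_{X_{n+1}})=\N(\U_{n+1})\simeq X$. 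The functorial commutativity would imply two crucial facts: each round-trip $X\to\Rips(X_n,r)\to X$ is homotopic to $\mathrm{id}_X$, and the middle composition $\Rips(X_1,r)\to X\to\Rips(X_2,r)$ is homotopic to the simplicial inclusion $j\colon\Rips(X_1,r)\hookrightarrow\Rips(X_2,r)$. The latter follows after noting that both $\V(\U_2|_{X_1})$ and $\V(\U_3|_{X_1})$ equal $\Rips(X_1,r)$, so the composed simplicial inclusion $\V(\U_2|_{X_1})\hookrightarrow\V(\U_2|_{X_2})\hookrightarrow\V(\U_3|_{X_2})$ is exactly $j$.

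Finally, I would apply $H_k(-,\mathbb F)$ and invoke Lemma \ref{Lem:CO} with $C=H_k(X,\mathbb F)$, $B=H_k(\Rips(X_1,r),\mathbb F)$, $E=H_k(\Rips(X_2,r),\mathbb F)$, and $A=F=H_k(X,\mathbb F)$. Since the total map $A\to F$ is the identity, the lemma forces $\rank j_*=\dim_{\mathbb F} H_k(X,\mathbb F)$; as both homology spaces are finite-dimensional, $\im j_*\cong H_k(X,\mathbb F)$. The main obstacle is the diagram chase in the preceding paragraph: verifying that the identifications $\N(\U_n|_{X_n})=\N(\U_n)$ and $\V(\U_{n+1}|_{X_n})=\Rips(X_n,r)$ interact correctly with the horizontal and vertical equivalences of Theorem \ref{Thm:NerveNested} to produce both $\mathrm{id}_X$ on round-trips and $j$ in the middle. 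Once that bookkeeping is done, the algebraic step via Lemma \ref{Lem:CO} is immediate.
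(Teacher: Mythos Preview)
Your proposal is correct and follows essentially the same route as the paper. The paper's proof is a single sentence (``apply Lemma~\ref{Lem:CO} to the right vertical thread of the diagram from the proof of Theorem~\ref{Thm:C}''); you have unpacked exactly what that sentence means, including the need to run the construction one step further to $X_3$ so that the five-term sequence $A\to B\to C\to E\to F$ of Lemma~\ref{Lem:CO} is available, and the verification that the composite $\Rips(X_1,r)\to\Rips(X_2,r)$ through the diagram coincides with the simplicial inclusion~$j$.
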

 
\begin{proof}
Apply Lemma \ref{Lem:CO} to the right vertical thread of the diagram from the proof of Theorem \ref{Thm:C}.
\end{proof}

\begin{Rem}
Note that the nesting of Theorem \ref{Thm:H} is horizontal, that is, it only considers one scale $r$ and increases the subspace (sample). This is in contrast to the use in  \cite{CO} where nesting is vertical (i.e., includes multiple scales at fixed sample), confined to small $r$, and constrained by the weak feature size of \cite{CO} (and consequently the structure of the \v Cech complex in Euclidean space). Therefore Theorem \ref{Thm:H} can be used at any individual scale without  extra requirements on the ambient space (provided the conditions of Theorem \ref{Thm:H} hold) and irrespectively of other scales. In particular, we may get appropriate reconstruction even for scales above the weak feature size. For a demonstration see Example \ref{Ex:1}.
\end{Rem}

\begin{Example}
\label{Ex:1}
\begin{figure}
\begin{tikzpicture}[scale=.9]
\draw (-3.3,0)++(150:.3) arc (150:-150:.3);
\draw (0,0)--(-3,0);
\draw (0,0) arc (180:540:1);
\end{tikzpicture}
\caption{Space $X$ of Example \ref{Ex:1}.}
\label{Fig:1}
\end{figure}
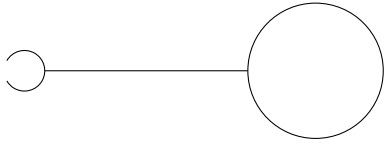

Let $\eps < \frac{1}{10}$. Space $X\subset \RR^2$ is presented by Figure \ref{Fig:1}: a smaller circle of diameter $1$ has an $\eps$-gap on the left and is connected on the right by a line of length $10$ to a circle of diameter $D>3$. It is easy to verify that by our results $\Rips(X,r)\simeq X$ for any $r < \eps$. A similar  reconstruction can be obtained by \cite{Att}: as $\eps/2>0$ is the weak feature size of $X$, we can choose a finite $A\subset X$  and a small $r>0$ so that   $\Rips(A,r)\simeq X$. Furthermore, the homology of $X$ with coefficients in a field can be extracted by \cite{CO} using a pair of Rips complexes of a finite subset at different scales or by using Theorem \ref{Thm:H}. However, it is not hard to see that also for $R\in (1,D/3)$ there exists a good $\Rips_R$-cover of $X$, hence $\Rips(X,R)\simeq X$ for scale $R$ as well. Consequently the homotopy type of $X$ can be extracted using Theorem \ref{Thm:C} at scale $R$ by a countable subset. 
Similarly, its homology with coefficients in a field may be extracted using Theorem \ref{Thm:H} at scale $R$ using two nested finite subsets.  Since the interval $(1,D/3)$ between two critical values of the distance function $x \mapsto d(x,X)$ can be arbitrarily small, results of \cite{CO, Att} do not apply in this case for sufficiently small $D>3$.
\end{Example}

\section{Appendix: homotopy types of Rips complexes }
\label{sect:App}

Suppose $X\subset \RR^n$ is a finite subset and $r>0$. A convenient implication of the nerve theorem for \v Cech complexes is that $\Cech_{\RR^n}(X,r)$ is homotopy equivalent to a subspace of the ambient space $\RR^n$ (in particular to the $r$-neighborhood of $X$). It is easy to see, however, that the same does not always hold for Rips complexes. In this section we explain why this is the case despite the presented version of the nerve theorem for Rips complexes, and how the difficulties surrounding this issues can sometimes be circumvented. We will do that by analyzing the case of a regular planar hexagon using Theorem \ref{Thm:RipsNerve}. A general strategy is the following: for $r>0$ first use Theorem \ref{Thm:RipsNerve} to construct a cover $\U$ of $X$ with $\Rips(X,r)= \V(\U)$, and then replace each element of $\U$ by a convex set in $\RR^n$ to obtain a good cover $\widetilde \U$, for which $\Rips(X,r) \simeq \cup_{\widetilde U \in \widetilde \U} \widetilde U$ potentially holds.

Let $X_1=\{x_1, x_2, \ldots, x_6\}\subset \RR^2$ denote the vertices of a regular hexagon in the plane with the side length $1$, i.e., $d(x_i,x_{i+1})=1=d(x_1, x_6)$. Define $r_2=d(x_1,x_3)$ and $r_3=d(x_1,x_4)$. The open Rips complexes attain four different homotopy types.
\begin{description}
 \item[a) $r \leq 1$] $\Rips(X_1,r) = \V(\{\{x_1\}, \{x_2\}, \ldots, \{x_6\}\})= X_1$ is a discrete space.
  \item[b) $1 <r \leq r_2$] $\Rips(X_1,r) = \V(\U_1)$ for 
  $$
  \U_1=\{\{x_1, x_2\}, \{x_2, x_3\}, \ldots, \{x_5,x_6\}, \{x_6,x_1\}\}.
  $$
  Replacing each element of $\U_1$ by the open $(r-1)/4$-neighborhood of its convex hull we get a good planar cover $\widetilde \U_1$ of the boundary of the corresponding regular hexagon in the plane. Since nonempty every intersection of sets of $\widetilde \U_1$ contains a point of $X_1$ we have 
  $\Rips(X_1,r) \simeq \N(\U_1) = \N(\widetilde \U_1) \simeq S^1$. 
  
In this case $\Rips(X_1,r)$ is actually homeomorphic to $S^1$. In order to demonstrate a more nontrivial example, let $X_2$ be obtained by replacing each $x_i$ by a subset $A_i$ of $n$ points in $B(x_i, (r-1)/4)$. In this case $\Rips(X_2,r)$ is  a $(2n-1)$-dimensional complex, but the above argument (with the open cover consisting of six sets, each of which is the open $(r-1)/4$-neighborhood of the convex hull of consecutive sets $A_i$) still holds hence $\Rips(X_2,r)\simeq S^1$.
  \item[c) $r_2 <r \leq r_3$] $\Rips(X_1,r) \simeq \N(\U_2)$ for 
  $$
  \U_2=\{\{x_1, x_2, x_3\}, \{x_2, x_3, x_4\}, \ldots, \{x_5,x_6, x_1\}, \{x_6,x_1, x_2\}\} \cup 
  $$
  $$
  \cup\{\{x_1, x_3, x_5\}, \{x_2, x_4, x_6\}\}.
  $$  
   Replacing each element of $\U_1$ by the open $(r-r_2)/4$-neighborhood of its convex hull we get a good planar cover $\widetilde \U_2$. However, $\N(\U_2) \neq \N(\widetilde \U_2)$ since the sets of $\widetilde \U_2$ corresponding to element $\{x_1, x_3, x_5\}, \{x_2, x_4, x_6\}\in \U_1$ intersect in $\RR^2$ but not in $X_1$. In fact $\N(\widetilde \U_2)$ is contractible while $ \N( \U_2)\simeq S^2$. To see the later note that if the vertices of $X_1$ are appropriately arranged as the vertices of an octahedron in $\RR^3$, then small neighborhoods of the faces of the octahedron form a good cover of the boundary of the octahedron so that:
\begin{itemize}
 \item the combinatorial structure of the cover matches that of $\U_2$;
 \item each nonempty intersection contains a point of $X_2$.
\end{itemize}
Hence $\Rips(X_1,r) \simeq \N(\U_2) \simeq S^2$. As before, the same holds if we replace each vertex of $X_1$ by a  large collection of points in its small neighborhood.
   
    \item[d) $r_3 <r $] $\Rips(X_1,r)$ is the full simplex on $X_1$ and hence contractible. 
\end{description}

Conclusion: For a nice representation of the homotopy type of $\Rips(X,r)$  one needs to:
\begin{itemize}
 \item choose a small cover $\U$ satisfying Theorem \ref{Thm:RipsNerve} (note that the cover of $X_2$ in \textbf{b)} above consists of only six sets despite $|X_2|=6n$), and
 \item extend $\U$ to a good cover $\widetilde \U$, so that the nonempty intersections of $\widetilde \U$ contain an element of $X$. 
\end{itemize}



\begin{thebibliography}{99}

\bibitem{AA}
M. Adamaszek, H. Adams. 
\emph{The Vietoris-Rips complexes of a circle}. 	
Pacific Journal of Mathematics 290 (2017), 1--40.

\bibitem{Att}D. Attali, A. Lieutier, and D. Salinas. Vietoris-Rips complexes also provide topologically correct reconstructions of sampled shapes. In Proceedings of the 27th annual ACM symposium on Computational geometry, SoCG '11, pages 491--500, New York, NY, USA, 2011. ACM.

\bibitem{BM}
J.A. Barmak and E.G. Minian, 
\emph{Strong Homotopy Types, Nerves and Collapses},
 Discrete Comput Geom (2012) 47: 301--328. 

\bibitem{Bor48}
{ K.\ Borsuk.}
On the imbedding of systems of compacta in simplicial complexes.
\emph{Fund.\ Math.} {\bf 35} (1948), 217--234, 
doi: 10.4064/fm-35-1-217-234.

\bibitem{CS}
N. Cavanna and D. Sheehy,
\emph{The Generalized Persistent Nerve Theorem}, 
	arXiv:1807.07920.

\bibitem{CCL09}
{ F.\ Chazal, D.\ Cohen-Steiner and A.\ Lieutier.}
A sampling theory for compact sets in Euclidean space.
\emph{Discrete Comput.\ Geom.} {\bf 41} (2009), 461--479, 
doi: 10.1007/s00454-009-9144-8.

\bibitem{CO} F. Chazal and S. Oudot, 
\emph{Towards Persistence-based Reconstruction in Euclidean Spaces}, 
in \emph{Proceedings of the twenty-fourth annual symposium on Computational geometry}, 232--241. ACM, 2008.


\bibitem{CM} S. Chowdhury and F. M\' emoli: 
\emph{A functorial Dowker theorem and persistent homology of asymmetric networks}, 
	arXiv:1608.05432.
	
\bibitem{Die} Tammo Tom Dieck:  \emph{Partitions of unity in homotopy theory}, Compositio Mathematica, Volume 23 (1971) no. 2, pp. 159--167.
	
\bibitem{Dol} A. Dold: \emph{Lectures on Algebraic Topology}, 2nd edn., Grundlehren der mathematischen Wissenschaften (Springer, New York , 1995). 

\bibitem{Dow}C.H. Dowker: 
\emph{Homology groups of relations}, Ann. Math. 56(1), 84--95 (1952).

\bibitem{Dyd} J. Dydak:
\emph{Extension Theory: the interface between set-theoretic and
algebraic topology}, 
Topology and its Applications 74 (1996) 225--258.

\bibitem{7A} 
E. Gasparovic, M. Gommel, E. Purvine, R. Sazdanovic, B. Wang, Y Wang, and L. Ziegelmeier,
\emph{A Complete Characterization of the $1$-Dimensional Intrinsic \v Cech Persistence Diagrams for Metric Graphs},
In: Chambers E., Fasy B., Ziegelmeier L. (eds): Research in Computational Topology. Association for Women in Mathematics Series, vol 13. Springer, Cham, 2018.

\bibitem{GS} D. Govc and P. \v Skraba, \emph{An approximate nerve theorem}, 
 Foundations of Computational Mathematics 18(5): 1245--1297 (2018).

\bibitem{Hat}A. Hatcher. \emph{Algebraic topology}. Cambridge University Press, Cambridge, 2002.

\bibitem{Haus}Jean-Claude Hausmann. \emph{On the Vietoris-Rips complexes and a cohomology theory for metric spaces.} Annals of Mathematics Studies, 138:175--188, 1995.


\bibitem{Lat}J. Latschev. \emph{Vietoris-Rips complexes of metric spaces near a closed Riemannian manifold.} Archiv der Mathematik, 77(6):522--528, 2001.

\bibitem{Ler45}
{ J.\ Leray.}
Sur la forme des espaces topologiques et sur les points fixes
  des re\-pr\'{e}\-sen\-ta\-tions.
\emph{J.\ Math.\ Pures Appl.} {\bf 24} (1945), 95--167.

\bibitem{MS} S. Marde\v si\' c and J. Segal, 
\emph{Shape Theory}, North-Holland, Amsterdam, 1982.


\bibitem{Sak} K. Sakai.: \emph{Geometric Aspects of General Topology}. Springer Monographs in Mathematics. Springer, Tokyo (2013).

\bibitem{W} P. Niyogi, S. Smale, and S. Weinberger. \emph{Finding the homology of submanifolds with high confidence from random samples.} Discrete Comput. Geom., 39:419--441, March 2008.

\bibitem{ZV} 
\v Z. Virk, 
\emph{1-Dimensional Intrinsic Persistence of geodesic spaces}, 
accepted to  Journal of Topology and Analysis,
https://doi.org/10.1142/S1793525319500444.

\bibitem{ZV1}
\v Z. Virk, 
\emph{Approximations of $1$-Dimensional Intrinsic Persistence of Geodesic Spaces and Their Stability},
Revista Matem\' atica Complutense 
(2019) 32: 195--213.

\bibitem{ZV2}
\v Z. Virk, 
\emph{Footprints of geodesics in persistent    homology}, 
in preparation.

\end{thebibliography}
\end{document}